\documentclass[12 pt]{amsart}
\usepackage{amscd,amssymb,amsthm}
\usepackage[arrow,matrix]{xy}
\usepackage{graphicx}
\topmargin=0.1in \textwidth5.9in \textheight7.85in
\oddsidemargin=0.3in \evensidemargin=0.3in
\theoremstyle{plain}

\theoremstyle{definition}

\numberwithin{equation}{section} \setcounter{tocdepth}{1}
\newtheorem{thm}[subsection]{Theorem}
\newtheorem{lem}[section]{Lemma}

\newtheorem{prop}[subsection]{Proposition}

\newtheorem{cor}[subsection]{Corollary}
\newtheorem{notn}[section]{Notation}
\newtheorem{expl}[subsection]{Example}
\newtheorem{exple}[subsubsection]{Example}


\newtheorem{definition}[section]{\textmd{Definition}}

\let\cite\citep
\bibliographystyle{chicago}
\def\qed{\ifhmode
\unskip\nobreak\hfill\vrule height5pt width4pt depth2pt\medskip\fi
\ifmmode\eqno{\vrule height5pt width4pt depth2pt}\fi}
\def\qed{\ifhmode\unskip\nobreak\fi\quad\ifmmode\Box\else$\Box$\fi}



\begin{document}
\author{Zahid Raza, Imran and  Bijan Davvaz}

\title{On the Structure of Involutions and  Symmetric Spaces of Quasi Dihedral Group}
 \address{Department of Mathematics \\ College of Sciences,University of Sharjah, UAE.}
\email{zraza@sharjah.ac.ae}

\address{Department of Mathematics \\ National University of Computer and Emerging Sciences, Lahore, Pakistan.}
\email{imran$\_$aimphd@yahoo.com}
\address{Department of Mathematics \\ Yazd University, Yazd, Iran}
\email{davvaz@.yazd.ac.ir}

\subjclass[2010]{ 	20F28 }

\keywords{Automorphisms, involutions, fixed-point set, symmetric spaces, quasi-dihedral group.}

\begin{abstract}
Let $G=QD_{8k}~$ be the quasi-dihedral group of order $8n$ and $\theta$ be an automorphism of $QD_{8k}$ of finite order. The fixed-point set $H$ of $\theta$ is defined as $H_{\theta}=G^{\theta}=\{x\in G \mid \theta(x)=x\}$ and  generalized symmetric space $Q$ of $\theta$  is given by $Q_{\theta}=\{g\in G \mid g=x\theta(x)^{-1}~\mbox{for some}~x\in G\}.$

The characteristics of the sets $H$ and $Q$ have been calculated. It is shown that for any $H$ and $Q,~~H.Q\neq QD_{8k}.$ The $H$-orbits on $Q$ are obtained under different conditions. Moreover, the formula to find the order of $v$-th root of unity in $\mathbb{Z}_{2k}$ for $QD_{8k}$ has been calculated. The criteria to find the number of equivalence classes denoted by $C_{4k}$ of the involution automorphism has also been constructed. Finally, the set of twisted involutions $R=R_{\theta}=\{~x\in G~\mid~\theta(x)=x^{-1}\}$ has been explored.
\end{abstract}
\maketitle
\section{Introduction}

Serious work on groups generated by reflections began in the $19$th century. The finite subgroups of $O(3)$ generated by isometric reflections on the $2$-sphere (or equivalently, by orthogonal linear reflections on $\mathbb{R}^{3}$) had been determined by  M\"{o}bius in $1852$. In the second half of the $19$th century, work also began on finite reflection groups on $S^{n}$ for $n>2$ (or equivalently, finite linear reflection groups on $\mathbb{R}^{n+1}$). By the late $19'$s other groups generated by isometric reflections across the edges of polygons (with $3$ or more edges) in the hyperbolic plane had been studied by Klein and Poincar´e. The group of symmetries of such a root system was a finite group generated by reflections had been shown by Weyl in $1925$. This intimate connection with the classification of semi-simple Lie groups cemented reflection groups into a central place in mathematics. The two lines of research were united in (Coxeter, $1930$). He classified discrete groups generated by reflections on the $n$-dimensional sphere or Euclidean space. The notion of an abstract reflection group introduced by Jacques Tits which he called a "coxeter group". He considered pairs $(W,S)$ with $W$ a group and $S$ a set of involutions which generate $W$ so that the group has a presentation of the following form: the set of generators is $S$ and the set of relations is $\{(st)^{m(s,t)}\},$ where $m(s,t)$ denotes the order of $st$ and the relations range over all unordered pairs $ s,t\in S$ with $m(s,t)\neq \infty$. The pair $(W,S)$ is a coxeter system and $W$ is a coxeter group. This was demonstrated in (Bourbaki, $1968$; Bjorner and Francesco, $2005$).

The early work by Helminck on symmetric spaces concerned some algebraic results about their structure(Helminck, $1988$). Classification of the real symmetric spaces and their fine structure had been included in (Helminck, $1994$). By considering an open subgroup $H$ of the fixed point group $G^{\theta}$ of an involution $\theta$ of $G,$ the result was proved that {`}{`} For $k$ a field of characteristic not two, let $G$ be a connected semi-simple algebraic $k$-group and $\theta$ an involution of $G.$ Then $\theta$ is defined over $k$ if and only if $H^{\theta}$ is defined over $k$" (Helminck and Wang, $1993$). They also proved that {`}{`} if $G$ is a connected semi-simple algebraic group, $\theta_{i}$ involution of $G$ and $H_{i}=G^{\theta_{i}},~(i=1,2).$ Let $H_{1}^{\theta_{1}}$ and $H_{2}^{\theta_{2}}$ be the identity components of $H_{1}$ and $H_{2}$ respectively. If $H_{1}^{\theta_{1}}=H_{2}^{\theta_{2}}$ then $\theta_{1}=\theta_{2}."$

A mini-workshop on generalizations of symmetric spaces was organized by  Ralf Kohl at.el in $2012$. Mathematicians in this area brought together to present their current research projects or trigger new collaboration via comparison, analogy, transfer, generalization and unification of method in this workshop.

 The structure of automorphism group of dihedral group has been found by Cunningham at.el ($2012$). They also characterized the automorphism group with $\theta^{2}=\mbox{id}$ and calculated $H$ and $Q$ for each automorphism for the family of finite dihedral groups. They also proved that the set of involutions is partitioned in such a way that there are at most two distinct equivalence classes of involutions in each of the partition. In involution case, $Q$ is always a subgroup of $G.$ Moreover, the infinite dihedral group is also discussed.

Let $G = Dc_n$ be the dicyclic group of order $4n$. Let $\phi$ be an
automorphism of $G$ of order $k$. Bishop at.el ($2013$) describe $\phi$ and the generalized symmetric space $Q$ of $G$ associated with $\phi$. When $\phi$ is an involution, they describe its fixed point group $H$ along with the $H$-orbits and
$G$-orbits of Q corresponding to the action of $\phi$-twisted conjugation.

In this paper, we extent the study to quasi-dihedral group $QD_{8k}$.
\section{Preliminaries}
 The quasi-dihedral group is denoted by $QD_{8k},$ where $QD_{8k} = \langle a , b~~: a^{4k}=b^{2}=1 , ba=a^{2k-1}b\rangle.$
In reset of the whole paper, we denote it by $QD_{2m},$ where $m=4k$ and $n=\frac{m}{2}=2k.$ That is
$~~~~~~~~~~~~~~~~~~~~~~~QD_{2m} = \langle a , b~~: a^{m}=b^{2}=1 , ba=a^{n-1}b\rangle$.
It is clear from the presentation that $a^{i}b^{j}\in QD_{2m}$ is in normal form for some integer $0 \leq i < m$ and $ j \in \{ 0,1 \}.$ In the whole study, $\mathbb{Z}_{n}$ denotes the additive group of integers modulo $n$ and $U_{m}$ denotes the multiplicative group of units of $\mathbb{Z}_{n}$ and $r\in U_{m}$ if and only if $\gcd(r,m)=1.$ The $v$-th root of unity in $\mathbb{Z}_{n}$ which can be described by $R^{v}_{m}=\{r\in U_{m}~\mid r^{v}=1~(mod~n) \}.$

Let $G$ be a group, then an element $a$ is a conjugate with $b$ in $G$ if and only if there exists an element $x$ such that $a=xbx^{-1}$. For $a\in G,$ the function $\phi_{a}(x)=axa^{-1}~~\forall~~x\in G$ is called an inner automorphism of G. Let $\theta_{1},~\theta_{2}$ be two automorphisms, then $\theta_{1} \sim \theta_{2}$ if and only if they are conjugate to each other. If there exists another automorphism $\sigma$ such that $\sigma \theta_{1} \sigma^{-1}=\theta_{2}$.
\begin{definition} For any $c\in \mathbb{Z}_{n},$ we define
 $ZD\mbox{iv}~(c)=\{y\in \mathbb{Z}_{n}\mid cy\equiv0~(mod~n)\}$ and easy to see that
 $\mid ZD\mbox{iv}~(c)\mid =\gcd(c,n)$
\end{definition}
\begin{notn}
Let us fixed the following notations, which will be used in the whole paper.
\begin{itemize}
              \item For fixed $r\in R^{v}_{m}.$ The number of equivalence classes with fixed leading co-efficient $r$ is denoted by $N_{r}=\mid \{\bar{\theta}~\mid~\theta=rx+s\in Aut_{v}(D_{2m})\} \mid$
              \item Let us consider the subset of automorphism denoted by $Aut_{v}(QD_{2m})$ and is defined as  $Aut_{v}(QD_{2m})=\{\theta\in Aut(QD_{2m})\mid\theta^{v}=\mbox{id}~\}$.
            \end{itemize}
\end{notn}
The following lemma is easy to prove.
\begin{lem}\label{f1}
If $QD_{2m}$ be the quasi-dihedral group, then
\begin{enumerate}
\item $ba^{i}=a^{i(n-1)}b$  \ \ for all $i=0,1,\ldots,(m-1),$
\item $\mid a^{2i}b\mid=2$ and $\mid a^{2i+1}b\mid=4$ \ \ for all $i=0,1,\ldots,(n-1).$
\end{enumerate}
\end{lem}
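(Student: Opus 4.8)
The plan is to treat the two parts separately; both reduce to direct manipulation of the single defining relation $ba = a^{n-1}b$, together with elementary modular arithmetic in the exponent, read throughout modulo $m$.

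For part (1), I would argue by induction on $i$. The base cases are immediate: $ba^{0} = b = a^{0}b$, and $ba = a^{n-1}b$ is precisely the defining relation, giving the case $i=1$. For the inductive step, assuming $ba^{i} = a^{i(n-1)}b$, I would compute
$$ba^{i+1} = (ba^{i})a = a^{i(n-1)}(ba) = a^{i(n-1)}a^{n-1}b = a^{(i+1)(n-1)}b,$$
which closes the induction and establishes the formula for every $i$ in the stated range.

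For part (2), the key step is to square a general element $a^{j}b$, using part (1) to commute $b$ past $a^{j}$:
$$(a^{j}b)^{2} = a^{j}(ba^{j})b = a^{j}a^{j(n-1)}b^{2} = a^{jn}.$$
Thus the order of $a^{j}b$ is governed entirely by the residue of $jn$ modulo $m$, and here the arithmetic is the decisive point. Since $n = m/2$, if $j = 2i$ is even then $jn = 2in = im \equiv 0 \pmod{m}$, so $(a^{2i}b)^{2} = 1$; because $a^{2i}b$ has a nontrivial $b$-factor in normal form it cannot be the identity, which forces $\mid a^{2i}b\mid = 2$.

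If instead $j = 2i+1$ is odd, the same computation yields $jn = (2i+1)n = im + n \equiv n \pmod{m}$, hence $(a^{2i+1}b)^{2} = a^{n}$. It then remains to note that $a^{n} = a^{m/2}$ has order $2$ in $\langle a\rangle$, so that $(a^{2i+1}b)^{4} = (a^{n})^{2} = a^{m} = 1$ while $(a^{2i+1}b)^{2} = a^{n} \neq 1$; together these pin the order at exactly $4$. I do not expect a genuine obstacle, as everything flows from one relation; the only point demanding care is the consistent reduction of exponents modulo $m$ and the recognition of the parity split $jn \equiv 0$ versus $jn \equiv n \pmod{m}$, which is exactly what separates the two order values.
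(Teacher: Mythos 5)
Your proof is correct. The paper itself gives no argument for this lemma (it is dismissed as ``easy to prove''), and your induction on $i$ for the commutation rule followed by the computation $(a^{j}b)^{2}=a^{jn}$ with the parity split $jn\equiv 0$ versus $jn\equiv n \pmod{m}$ is exactly the standard argument that fills that gap; the only implicit ingredient is the normal form (so that $a$ has order $m$ and $a^{2i}b\neq 1$), which the paper assumes from the outset.
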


\begin{lem}
The automorphism group of $QD_{2m}$ is isomorphic to the group of affine linear transformations of $\mathbb{Z}_n.$ In particular, we have
$$Aut(QD_{2m})\cong Aff(\mathbb{Z}_n)= \{rx+s: \mathbb{Z}_n \rightarrow \mathbb{Z}_n \mid r\in U_{m}, s\in  \mathbb{Z}_n \}$$
and the action of $rx+s$ on elements of $QD_{2m}$ in normal form is given by
\begin{equation}\label{e1}
(rx+s)(a^{j}b^{t}) = a^{rj+[(t-1)n-(t-2)]s}b^{t}
\end{equation}
\end{lem}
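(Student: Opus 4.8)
The plan is to reduce everything to the action of $\theta$ on the two generators $a$ and $b$, to determine exactly which images are admissible, and then to recognise the resulting parameter set, together with its composition law, as the affine group $\mathrm{Aff}(\mathbb{Z}_n)$.

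First I would locate $\theta(a)$. By Lemma \ref{f1}(2) every element outside $\langle a\rangle$ has order $2$ or $4$, whereas $a$ has order $m=4k$; hence for $k\geq 2$ the subgroup $\langle a\rangle$ is the unique cyclic subgroup of order $m$ and is therefore characteristic. (The value $k=1$ is degenerate, since the relation collapses to $ba=ab$ and the group becomes abelian, so I would set it aside.) Consequently $\theta$ restricts to an automorphism of $\langle a\rangle\cong\mathbb{Z}_m$, which forces $\theta(a)=a^{r}$ for some $r\in U_m$.

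Next I would determine $\theta(b)$. Since $\theta(b)$ has order $2$ and cannot lie in the abelian subgroup $\langle a\rangle$ (otherwise $\theta(a),\theta(b)$ could not generate $QD_{2m}$), it is an order-$2$ element of the form $a^{c}b$. A direct computation using Lemma \ref{f1}(1) gives $(a^{c}b)^{2}=a^{cn}$, so the order-$2$ requirement is exactly $cn\equiv 0\pmod m$, i.e. $c$ even; writing $c=2s$ with $s\in\mathbb{Z}_n$ gives $\theta(b)=a^{2s}b$. Conversely I would verify that every assignment $a\mapsto a^{r}$, $b\mapsto a^{2s}b$ with $r\in U_m$ and $s\in\mathbb{Z}_n$ really is an automorphism: the defining relation survives because $(a^{2s}b)a^{r}=a^{2s+r(n-1)}b=(a^{r})^{n-1}(a^{2s}b)$ by Lemma \ref{f1}(1), so the assignment extends to an endomorphism, and it is onto—hence bijective—since $a^{r}$ generates $\langle a\rangle$ and then $b=a^{-2s}(a^{2s}b)$ is recovered. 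This sets up a bijection between $\mathrm{Aut}(QD_{2m})$ and the pairs $(r,s)\in U_m\times\mathbb{Z}_n$.

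Finally I would promote this bijection to a group isomorphism and extract the action formula. Composing $\theta_1,\theta_2$ with parameters $(r_1,s_1),(r_2,s_2)$ gives $(\theta_1\theta_2)(a)=a^{r_1r_2}$ and $(\theta_1\theta_2)(b)=a^{2(r_1 s_2+s_1)}b$, so the parameters multiply by $(r,s)\mapsto(r_1 r_2,\,r_1 s_2+s_1)$, which is precisely the composition law of $\mathrm{Aff}(\mathbb{Z}_n)$ with linear part $r\in U_m$ and translation $s\in\mathbb{Z}_n$; this yields the stated isomorphism, and the explicit action (\ref{e1}) then follows by expanding $\theta(a^{j}b^{t})=\theta(a)^{j}\theta(b)^{t}=a^{rj}(a^{2s}b)^{t}$ and simplifying with Lemma \ref{f1}. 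I expect the two genuinely delicate points to be (i) the characteristic-subgroup argument, where Lemma \ref{f1}(2) on the orders of the elements $a^{i}b$ is indispensable and the degenerate small case must be isolated, and (ii) checking that the correspondence is a homomorphism rather than a mere bijection—in particular keeping the linear coefficient $r$ recorded modulo $m$ while the translation $s$ is only seen modulo $n$, so that no information is lost when $r$ is allowed to act on $\mathbb{Z}_n$.
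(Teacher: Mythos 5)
Your proposal is correct, and it takes a genuinely different --- and more complete --- route than the paper's. The paper simply \emph{asserts} that $\theta(a)=a^{r}$ with $\gcd(r,m)=1$ and $\theta(b)=a^{s}b$ with $s$ even, spends its effort on an inductive computation of $\theta(b^{t})$ to produce the displayed action formula, and then argues the converse by checking that the map so defined is injective and surjective; it never verifies that the correspondence $\theta\leftrightarrow rx+s$ respects composition, which is the actual content of the word ``isomorphic''. You, by contrast, \emph{prove} the form of $\theta$ on the generators (the characteristic subgroup $\langle a\rangle$ via the order statistics of Lemma \ref{f1}, then the computation $(a^{c}b)^{2}=a^{cn}$ forcing $c=2s$), verify the converse by checking that the proposed images satisfy the defining relations, and then compute the composition law $(r_{1},s_{1})\cdot(r_{2},s_{2})=(r_{1}r_{2},\,r_{1}s_{2}+s_{1})$, which is exactly what identifies $Aut(QD_{2m})$ with the group of formal affine maps the paper calls $Aff(\mathbb{Z}_{n})$ (really $U_{m}\ltimes\mathbb{Z}_{n}$, since, as you note, $r$ must be remembered modulo $m$ while $s$ lives modulo $n$). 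Your isolation of the degenerate case $k=1$, where the group is abelian and $\langle a\rangle$ is not characteristic, is also a point the paper silently ignores. In short, your argument supplies rigor precisely where the paper is weakest: the classification of images of the generators, and the homomorphism property of the parametrization.

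One divergence should be stated honestly rather than papered over: your expansion gives $\theta(a^{j}b^{t})=a^{rj}(a^{2s}b)^{t}=a^{rj+2st}b^{t}$, and this is \emph{not} equation (\ref{e1}). With the paper's normalization ($\theta(b)=a^{s}b$, $s$ even) the two agree at $t=1$ after the substitution $s\leftrightarrow 2s$, but at $t=0$ equation (\ref{e1}) yields $\theta(a^{j})=a^{rj+(2-n)s}$; since $ns\equiv 0\pmod{m}$ for even $s$, this equals $a^{rj+2s}$, which differs from the required $a^{rj}$ whenever $s\not\equiv 0\pmod{n}$. Indeed, a map literally satisfying (\ref{e1}) would fail to be a homomorphism (compare the images of $a\cdot a$ and $a^{2}$). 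So what your derivation produces is the corrected formula $\theta(a^{j}b^{t})=a^{rj+ts}b^{t}$ (equivalently $a^{rj+2st}b^{t}$ in your parametrization); you should present that formula as the conclusion rather than claim (\ref{e1}) follows, since (\ref{e1}) as printed is misstated in the $t=0$ case.
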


\begin{proof}
Let $\theta$ be an automorphism, then
$\theta(a)=a^{r}$ and
$\theta(b)=(a^{s}b),$ where $\gcd( r, m ) = 1$, $s=0, 2, 4,\ldots ,(m-2).$
  Consider arbitrary element $a^{j}b^{t}\in QD_{2m},$ and $\theta$ being homomorphism,
\begin{equation}\label{e2}
\theta(a^{j}b^{t})=\theta(a^{j}) \theta(b^{t}),
\end{equation}
then
$\theta(a^{j})=  a^{rj}$ and
$\theta(b^{t})= {(a^{s}b)(a^{s}b) (a^{s}b)\ldots(a^{s}b)}= {a^{s}(ba^{s})(ba^{s})b\ldots(a^{s}b)}.$

By Lemma \ref{f1}, we have\\
 $\theta(b^{t})= {a^{s}[a^{s(n-1)}b] (ba^{s})(ba^{s}) b\ldots(a^{s}b)}= {a^{s+s(n-1)}(b^{2}a^{s})(ba^{s})b\ldots(a^{s}b)}$\\
$\theta(b^{t})= {a^{sn}(b^{2}a^{s})(ba^{s})b\ldots(a^{s}b)}= {a^{sn}(a^{s(n-1)}b^{2})(ba^{s})b\ldots(a^{s}b)}$\\
$\theta(b^{t})= {a^{sn+sn-s}b^{3}(ba^{s})b\ldots(a^{s}b)}= {a^{2sn-s}b^{3}(ba^{s})b\ldots(a^{s}b)}$
after $t$-steps, we have
$\theta(b^{t})= a^{(t-1)ns-(t-2)s}b^{t}.$
Put values in equation \ref{e2}, we get\\
$\theta(a^{j}b^{t})=a^{rj}.a^{(t-1)ns-(t-2)s}b^{t}=a^{rj+(t-1)ns-(t-2)s}b^{t}=a^{rj+[(t-1)n-(t-2)]s}b^{t}.$
Since $b^{2}=1$, so $t\in\{0,1\}.$
If $t=0$, then
$\theta(a^{j})= a^{rj+(2-n)s}$  $~~~~~~~\Longrightarrow rx+(2-n)s\in Aff (\mathbb{Z}_{n}).$
If $t=1$, then
$\theta(a^{j}b)= a^{rj+s}b$ $~~~~~~~~~~\Longrightarrow rx+s\in Aff (\mathbb{Z}_{n}).$\\
\textbf{Conversely,}
For $ f\in Aff (\mathbb{Z}_{n}),$ that is $~f=rx+s$  where $r\in U_{m},~~s\in \mathbb{Z}_{n}$
and it is given as a group action
$f(a^{j}b^{t}) = a^{rj+[(t-1)n-(t-2)]s}b^{t}$ \ \ for all \ $a^{j}b^{t}\in QD_{2m}.$\\
Now, we will show that $f$ is an automorphism.
For this let us define $\theta :QD_{2m}\longrightarrow QD_{2m}$ by
$\theta(a^{j}b^{t})=f(a^{j}b^{t}) = a^{rj+[(t-1)n-(t-2)]s}b^{t}.$
We will show that $\theta$ is a bijective homomorphism. For this we will show that $\theta$ is one to one and onto.\\
\textbf{one to one:}
Suppose that
$\theta(a^{j_{1}}b^{t_{1}})=\theta(a^{j_{2}}b^{t_{2}})$, then
$ a^{rj_{1}+[(t_{1}-1)n-(t_{1}-2)]s}b^{t_{1}}= a^{rj_{2}+[(t_{2}-1)n-(t_{2}-2)]s}b^{t_{2}}$
$ \Rightarrow a^{rj_{1}+[(t_{1}-1)n-(t_{1}-2)]s-rj_{2}-[(t_{2}-1)n-(t_{2}-2)]s}b^{t_{1}-t_{2}}=e.$\\
 \textit{Case$~1:$}\\
$rj_{1}+[(t_{1}-1)n-(t_{1}-2)]s-rj_{2}-[(t_{2}-1)n-(t_{2}-2)]s=0$        and        $t_{1}-t_{2}=2$,
since, $t\in\{0,1\}$    therefore  $t_{1}-t_{2}\neq2$ not possible.\\
\textit{Case$~2:$}\\
$rj_{1}+[(t_{1}-1)n-(t_{1}-2)]s-rj_{2}-[(t_{2}-1)n-(t_{2}-2)]s=m$        and        $t_{1}-t_{2}=0$
$\Rightarrow r(j_{1}-j_{2})+ns(t_{1}-t_{2})-s(t_{1}-t_{2})=m$
$\Rightarrow r(j_{1}-j_{2})=m$ $\Rightarrow r\mid m $ but $\gcd(r,m)=1$, not possible.  Now
$ r(j_{1}-j_{2})=m$ will be only possible when $j_{1}-j_{2}=0,~~r\neq 0$
$j_{1}=j_{2}~~~~~\Rightarrow a^{j_{1}}=a^{j_{2}}~~~~~\Rightarrow a^{j_{1}}b^{t_{1}}=a^{j_{2}}b^{t_{2}}$         $\Rightarrow \theta$ is one to one.\\
\textbf{onto:}
It is cleared from the definition of the group action that for every $a^{rj+[(t-1)n-(t-2)]s}b^{t}$ there exists $a^{j}b^{t}\in QD_{2m}$ such that $\theta(a^{j}b^{t})=a^{rj+[(t-1)n-(t-2)]s}b^{t}$ $~~\Rightarrow \theta$ is onto. $ \mbox{Hence}\ \ \ Aut(QD_{2m})\cong Aff(\mathbb{Z}_{n}).$
\end{proof}
\section{Structure of Automorphisms of  $QD_{2m}$}
\begin{prop}\label{f2}
For any integer $v\geq1$ and $\theta=rx+s$ be an automorphism of $QD_{2m}$ then $\theta^{v}=$id if and only if $r\in R^{v}_{m}$ and $s\in ZD\mbox{iv}(r^{v-1}+r^{v-2}+\cdots +r+1)$.
\end{prop}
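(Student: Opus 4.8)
The plan is to work inside $Aff(\mathbb{Z}_n)$, using the isomorphism $Aut(QD_{2m})\cong Aff(\mathbb{Z}_n)$ of the previous lemma, so that $\theta^v$ becomes the $v$-fold composite of the affine map $rx+s$ on $\mathbb{Z}_n$; the whole statement then reduces to reading off the coefficients of this composite and demanding that it be the identity map $x\mapsto x$.

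First I would record the composition rule $(r_1x+s_1)\circ(r_2x+s_2)=(r_1r_2)x+(r_1s_2+s_1)$, which is immediate from evaluating $rx+s$ and is the group law transported from $Aut(QD_{2m})$. An easy induction on $v$ then gives $\theta^v=r^vx+\bigl(r^{v-1}+r^{v-2}+\cdots+r+1\bigr)s$: the base case $v=1$ is the definition of $\theta$, and if $\theta^{v-1}=r^{v-1}x+c_{v-1}$ with $c_{v-1}=(r^{v-2}+\cdots+1)s$, then composing once more sends the constant to $rc_{v-1}+s=(r^{v-1}+\cdots+1)s$, which is the claimed geometric sum. Equivalently, on generators, $\theta^v(a)=a^{r^v}$ and $\theta^v(b)=a^{(r^{v-1}+\cdots+1)s}b$, obtained from $\theta(a)=a^r$ and $\theta(b)=a^sb$ by the same recursion $c_i\mapsto rc_i+s$.

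Since the identity of $Aff(\mathbb{Z}_n)$ is $x\mapsto x$, the equation $\theta^v=\mathrm{id}$ holds exactly when the leading coefficient and the constant of $\theta^v$ are $1$ and $0$ respectively, i.e.\ when $r^v\equiv 1\ (\mathrm{mod}\ n)$ and $(r^{v-1}+\cdots+r+1)s\equiv 0\ (\mathrm{mod}\ n)$. By the definition of $R^v_m$ the first condition is precisely $r\in R^v_m$, and by the definition of $ZD\mathrm{iv}$ the second is precisely $s\in ZD\mathrm{iv}(r^{v-1}+\cdots+r+1)$; reading the equivalence in both directions yields the ``if and only if''. The step that needs genuine care is the modular bookkeeping: one must keep $r$ a unit modulo $m$ while the root-of-unity condition is recorded modulo $n$, and recall that the admissible $s$ are the even residues identified with $\mathbb{Z}_n$. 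If instead one argues directly in $QD_{2m}$ via $\theta^v(a)=a^{r^v}$ and $\theta^v(b)=a^{c_v}b$, the generator-fixing conditions come out modulo $m$, and one then uses $m=2n$ together with the evenness of $s$ to check that they collapse to exactly the mod-$n$ statements above.
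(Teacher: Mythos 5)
Your proposal is correct and follows essentially the same route as the paper: both compute $\theta^v = r^v x + (r^{v-1}+\cdots+r+1)s$ in $Aff(\mathbb{Z}_n)$ by induction on $v$, then equate this with the identity map $x$ to read off $r^v\equiv 1\ (\mathrm{mod}\ n)$ and $(r^{v-1}+\cdots+r+1)s\equiv 0\ (\mathrm{mod}\ n)$, which are by definition the conditions $r\in R^v_m$ and $s\in ZD\mathrm{iv}(r^{v-1}+\cdots+r+1)$. Your added remark about the mod-$m$ versus mod-$n$ bookkeeping is a point the paper passes over silently, but it does not change the argument.
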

\begin{proof}
We will use the mathematical induction on $v$ to show first the following statement:
$\theta^{v}(x)=r^{v}x+(r^{v-1}+r^{v-2}+\cdots +r+1)s$.

For $v=2,~3$
$\theta^{2}(x)=\theta[\theta(x)]=\theta(rx+s)=r(rx+s)+s=r^{2}x+rs+s=r^{2}x+(r+1)s$.
Similarly,
          $\theta^{3}(x)=r^{3}x+r^{2}s+rs+s=r^{3}x+(r^{2}+r+1)s$.

Suppose that the statement is true for $v=i$ that is\\
$\theta^{i}(x)=r^{i}x+(r^{i-1}+r^{i-2}+\cdots +r+1)s$.
We will show that the statement is true for $v=i+1$.
Take $\theta^{i+1}(x)=\theta[\theta^{i}(x)]=\theta[r^{i}x+(r^{i-1}+r^{i-2}+\cdots +r+1)s]$
$\theta^{i+1}(x)=r[r^{i}x+(r^{i-1}+r^{i-2}+\cdots +r+1)s]+s$
$=r^{i+1}x+(r^{i}+r^{i-1}+\cdots +r+1)s]$.
          Hence , the statement is true for all positive values of $v.$
We have identified the identity automorphism with $x\in Aff(\mathbb{Z}_{n}),$  thus
$r^{v}\equiv1~~~~~(mod~n)$ and $(r^{v-1}+r^{v-2}+\cdots +r+1)s\equiv0~~~~(mod~n)$
$\Leftrightarrow r\in R^{v}_{m}$ and $s\in ZD\mbox{iv}(r^{v-1}+r^{v-2}+\cdots +r+1)$ as required.
\end{proof}

\begin{expl}
Let $G=QD_{56}$ and $v=3$. In this case $R_{28}^{3}=\{ 1,9,11,15,23,25 \},$
take $r=9$, then $ZDiv(9^{2}+9+1)=ZDiv(91)=\{0,2,4,6,8,10,12\}$.
Now, consider $ \theta_{1}=9x+6 \in Aut(QD_{56}),$ then after simple calculation we get $\theta_{1} ^{3}=x=$id. Similarly, for $ \theta_{2}=9x+12 \in Aut(QD_{56}),$ we have $\theta_{2} ^{3}=x=$id.
\end{expl}
\begin{prop} \label{f3}
For any $m$ and $v\geq1,$ we have
$$|Aut_{v}(QD_{2m})|=\sum_{r\in R^{v}_{m}}\gcd(r^{v-1}+r^{v-2}+\cdots +r+1,n)$$
\end{prop}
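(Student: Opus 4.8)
The plan is to reduce the count to a direct application of Proposition \ref{f2} together with the cardinality formula $\mid ZD\mbox{iv}(c)\mid = \gcd(c,n)$ recorded in the Definition. Since $Aut(QD_{2m}) \cong Aff(\mathbb{Z}_n)$, every automorphism is written uniquely as $\theta = rx + s$ with $r \in U_{m}$ and $s \in \mathbb{Z}_n$; this uniqueness is the key structural fact that makes the enumeration clean, because it lets me treat the leading coefficient $r$ and the translation part $s$ as independent coordinates.

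First I would fix the abbreviation $c_r = r^{v-1} + r^{v-2} + \cdots + r + 1$ and invoke Proposition \ref{f2} to obtain the characterization: $\theta = rx+s$ lies in $Aut_{v}(QD_{2m})$ if and only if $r \in R^{v}_{m}$ and $s \in ZD\mbox{iv}(c_r)$. This says exactly that $Aut_{v}(QD_{2m})$, viewed inside $Aff(\mathbb{Z}_n)$, consists of the pairs $(r,s)$ in which $r$ ranges over $R^{v}_{m}$ and, for each such $r$, $s$ ranges over the set $ZD\mbox{iv}(c_r)$.

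Next I would partition $Aut_{v}(QD_{2m})$ according to the value of the leading coefficient $r$. For distinct values of $r$ the corresponding sets of automorphisms are disjoint, again by uniqueness of the $rx+s$ representation, so
$$|Aut_{v}(QD_{2m})| = \sum_{r \in R^{v}_{m}} \mid ZD\mbox{iv}(c_r)\mid .$$
Finally, applying the identity $\mid ZD\mbox{iv}(c)\mid = \gcd(c,n)$ from the Definition to each summand with $c = c_r$ gives $\mid ZD\mbox{iv}(c_r)\mid = \gcd(r^{v-1} + \cdots + r + 1, n)$, which yields the claimed formula.

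There is no serious obstacle here: essentially all of the content sits in Proposition \ref{f2}, and what remains is a bookkeeping argument. The only points demanding care are to confirm that $R^{v}_{m} \subseteq U_{m}$, so that every $r$ appearing in the sum really does correspond to a genuine automorphism, and that the decomposition over $r$ is a genuine disjoint union covering $Aut_{v}(QD_{2m})$ exactly once; both follow immediately from the uniqueness of the affine representation $\theta = rx + s$.
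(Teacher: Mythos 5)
Your proof is correct and follows essentially the same route as the paper's: both reduce the count to Proposition \ref{f2}, partition $Aut_{v}(QD_{2m})$ by the leading coefficient $r\in R^{v}_{m}$, and apply the identity $\mid ZD\mbox{iv}(c)\mid=\gcd(c,n)$ to count the admissible translation parts $s$ for each $r$. Your version merely makes explicit the disjointness of the decomposition via uniqueness of the affine representation, which the paper leaves implicit.
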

\begin{proof}
This follows from above Proposition: for any ${r\in R^{v}_{m}}$ there are $$|ZD\mbox{iv}(r^{v-1}+r^{v-2}+\cdots +r+1)|=\gcd(r^{v-1}+r^{v-2}+\cdots +r+1,n)$$ element $s$ such that $(r^{v-1}+r^{v-2}+\cdots +r+1)s\equiv0~(\mbox{mod~}n)$ and every automorphism $\theta\in Aut_{v}(QD_{2m})$ must be of this form.
\end{proof}

\begin{prop} \label{f11}
$Let~\theta_{1}=r_{1}x+s_{1}~and~\theta_{2}=r_{2}x+s_{2}$ be two different automorphism of $QD_{2m}$. Then $\theta_{1}\sim\theta_{2}$ if and only if $r_{1}=r_{2}$ and $fs_{1}-s_{2}\in \langle r_{1}-1 \rangle \leq \mathbb{Z}_{n}$ for some $f\in U_{m}.$
\end{prop}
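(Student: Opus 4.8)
The plan is to reduce the conjugacy relation to a direct computation inside $Aff(\mathbb{Z}_n)$, using the isomorphism $Aut(QD_{2m}) \cong Aff(\mathbb{Z}_n)$ established above. Since $\theta_1 \sim \theta_2$ means there is an automorphism $\sigma$ with $\sigma\theta_1\sigma^{-1} = \theta_2$, I would first write $\sigma = fx+g$ with $f \in U_m$ and $g \in \mathbb{Z}_n$ and record the composition law in the affine group: for $\alpha = rx+s$ one has $(fx+g)\circ(rx+s) = (fr)x + (fs+g)$. From this the inverse is immediately $\sigma^{-1} = f^{-1}x - f^{-1}g$.

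The central step is to compute the conjugate $\sigma\theta_1\sigma^{-1}$ explicitly. Applying the composition law twice gives
\[
\sigma\theta_1\sigma^{-1} = r_1 x + \bigl(f s_1 + g(1-r_1)\bigr),
\]
so conjugation leaves the leading coefficient unchanged and only shifts the constant term by a multiple of $(1-r_1)$. Equating this with $\theta_2 = r_2 x + s_2$ forces $r_1 = r_2$ together with $f s_1 - s_2 = g(r_1-1)$. As $g$ ranges over all of $\mathbb{Z}_n$, the values $g(r_1-1)$ range over exactly the cyclic subgroup $\langle r_1-1\rangle \le \mathbb{Z}_n$; hence the existence of a conjugating $\sigma$ is equivalent to $r_1 = r_2$ and $f s_1 - s_2 \in \langle r_1-1\rangle$ for some $f \in U_m$, which is the claimed criterion.

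For the converse I would simply run this backwards: given $r_1 = r_2$ and an $f \in U_m$ with $f s_1 - s_2 = g(r_1-1)$ for some $g \in \mathbb{Z}_n$, the affine map $\sigma = fx+g$ is a genuine automorphism since $f \in U_m$, and the same computation yields $\sigma\theta_1\sigma^{-1} = \theta_2$. I do not anticipate a serious obstacle; the only care required is in the double composition and in the observation that $\{g(r_1-1) : g \in \mathbb{Z}_n\}$ is precisely $\langle r_1-1\rangle$, which is what turns the ``for some $g$'' statement into membership in the stated subgroup. The one point to track is that two different moduli are in play: the equality $r_1 = r_2$ is an identity in $U_m$, whereas the constant-term condition lives in $\mathbb{Z}_n$, so that $\langle r_1-1\rangle$ is formed inside $\mathbb{Z}_n$ exactly as the statement requires.
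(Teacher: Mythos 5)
Your proposal is correct and follows essentially the same route as the paper: both compute $\sigma\theta_1\sigma^{-1}$ for $\sigma = fx+g$ directly in $Aff(\mathbb{Z}_n)$, obtaining $r_1x + fs_1 - g(r_1-1)$, and then equate coefficients with $\theta_2$, identifying $\{g(r_1-1) : g \in \mathbb{Z}_n\}$ with $\langle r_1-1\rangle$. Your write-up is in fact slightly cleaner, since you state the affine composition law once and make the converse direction explicit, whereas the paper leaves both implicit in the same computation.
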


\begin{proof}
Given that  $~\theta_{1}=r_{1}x+s_{1}$ and $\theta_{2}=r_{2}x+s_{2}\in Aut(QD_{2m})$.
Suppose that $\sigma=fx+g$ be any arbitrary automorphism of $QD_{2m}$ and define
$\sigma^{-1}=f^{-1}x-f^{-1}g$.
 We will check that $\sigma \cdot\sigma^{-1}=x$.
  For this take
  $\sigma \cdot\sigma^{-1}=\sigma (\sigma^{-1}(x))=\sigma(f^{-1}x-f^{-1}g)=f(f^{-1}x-f^{-1}g)+g=x-g+g=x$.
  Now, we calculate
\begin{equation}\label{e3}
    \sigma \theta_{1}\sigma^{-1}=\sigma \cdot ~(\theta_{1} \cdot ~\sigma^{-1})
\end{equation}
Take
$\theta_{1} \cdot ~\sigma^{-1}=\theta_{1}[\sigma^{-1}(x)]=\theta_{1}[f^{-1}x-f^{-1}g]=r_{1}(f^{-1}x-f^{-1}g)+s_{1}$
$~~~~~~~~~=r_{1}f^{-1}x-r_{1}f^{-1}g+s_{1}~~~~~~~$ put in \ref{e3}, we get
$\sigma \theta_{1}\sigma^{-1}=\sigma(r_{1}f^{-1}x-r_{1}f^{-1}g+s_{1})=f(r_{1}f^{-1}x-r_{1}f^{-1}g+s_{1})+g$
$\sigma \theta_{1}\sigma^{-1}=r_{1}x-r_{1}g+fs_{1}+g=r_{1}x+fs_{1}-g(r_{1}-1)$.
Now, by definition
$\sigma \theta_{1}\sigma^{-1}=\theta_{2}$ if and only if $r_{1}=r_{2}$ and
$fs_{1}-g(r_{1}-1)=s_{2}$ $~~~~~~~~\Rightarrow fs_{1}-s_{2}=g(r_{1}-1)$
$\Rightarrow fs_{1}-s_{2}\in \langle r_{1}-1 \rangle.$
\end{proof}

\begin{expl}
Let $G=QD_{64}$ and consider $\theta_{1}=7x+3, ~\theta_{2}=7x+14,~\theta_{3}=7x+11$ three automorphisms of $QD_{64}$. Then it is easy to check that $\theta_{1}\nsim\theta_{2}$ because there does not exist such $f\in U_{32}$ for which $3f-14 \in \langle 6\rangle$. And $\theta_{1}\sim\theta_{3}$ because there exist some $f\in U_{32}$ for which $3f-14 \in \langle 6\rangle$.
\end{expl}

\begin{prop} \label{f15}
Suppose $m$ is fixed and let $v\geq1.$
\begin{enumerate}
\item For any $r\in R^{v}_{m}~;\langle r-1 \rangle \leq ZD\mbox{iv}(r^{v-1}+r^{v-2}+\cdots +r+1).$
\item For any $r\in R^{v}_{m};~U_{m}$ acts on the cosets $\frac{ZD\mbox{iv}(r^{v-1}+r^{v-2}+\cdots +r+1)}{\langle r-1\rangle}.$
\item The set $Aut_{v}(QD_{2m})$ is partitioned into equivalence classes indexed by pair $(r,A),$ where $r\in R^{v}_{m}$ and $A$ is an orbit of $U_{m}$ on $\frac{ZD\mbox{iv}(r^{v-1}+r^{v-2}+\cdots +r+1)}{\langle r-1\rangle}.$
\end{enumerate}
\end{prop}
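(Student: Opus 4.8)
The plan is to treat the three parts in order, since each relies on the previous. Throughout I will abbreviate $\Sigma = r^{v-1}+r^{v-2}+\cdots +r+1$, so that $ZD\mbox{iv}(\Sigma)$ is the common object in all three statements. Recall from the Definition that $ZD\mbox{iv}(c)=\{y\in\mathbb{Z}_n\mid cy\equiv 0\ (\mbox{mod }n)\}$ is the kernel of the multiplication-by-$c$ map on $\mathbb{Z}_n$, hence in particular a subgroup of $\mathbb{Z}_n$.

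For part (1), the single identity that does all the work is $(r-1)\Sigma = r^v-1$. Since $r\in R^v_m$ means $r^v\equiv 1\ (\mbox{mod }n)$, this gives $(r-1)\Sigma\equiv 0\ (\mbox{mod }n)$, i.e. $r-1\in ZD\mbox{iv}(\Sigma)$. Because $ZD\mbox{iv}(\Sigma)$ is a subgroup of $\mathbb{Z}_n$ containing the element $r-1$, it contains the cyclic subgroup $\langle r-1\rangle$ generated by it, which is exactly the claim.

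For part (2), I would first record that $U_m$ acts on $\mathbb{Z}_n$ by multiplication: for $f\in U_m$ the condition $\gcd(f,m)=1$ together with $n\mid m$ forces $\gcd(f,n)=1$, so multiplication by $f$ is a bijection of $\mathbb{Z}_n$. Two checks then make this action descend to the quotient. First, $f$ preserves $ZD\mbox{iv}(\Sigma)$: if $\Sigma y\equiv 0$ then $\Sigma(fy)=f(\Sigma y)\equiv 0$. Second, $f$ preserves $\langle r-1\rangle$: as a subgroup of $\mathbb{Z}_n$ this equals $\gcd(r-1,n)\,\mathbb{Z}_n$, and $f$ times a multiple of $\gcd(r-1,n)$ is again such a multiple. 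Consequently the rule $f\cdot(y+\langle r-1\rangle):=fy+\langle r-1\rangle$ is well defined on $ZD\mbox{iv}(\Sigma)/\langle r-1\rangle$ (if $y-y'\in\langle r-1\rangle$ then $f(y-y')\in\langle r-1\rangle$), and the action axioms for $U_m$ are immediate from associativity of multiplication and the fact that $1\in U_m$ acts as the identity.

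For part (3), combine Proposition \ref{f2} and Proposition \ref{f11}. By Proposition \ref{f2} we have $Aut_v(QD_{2m})=\{rx+s\mid r\in R^v_m,\ s\in ZD\mbox{iv}(\Sigma)\}$, and by Proposition \ref{f11} two such automorphisms $r_1x+s_1$ and $r_2x+s_2$ are equivalent precisely when $r_1=r_2$ and $fs_1-s_2\in\langle r_1-1\rangle$ for some $f\in U_m$. Thus the leading coefficient $r$ is a conjugacy invariant and the classes split first according to $r\in R^v_m$. For fixed $r$, the residual condition ``$fs_1-s_2\in\langle r-1\rangle$ for some $f\in U_m$'' says exactly that the cosets $s_1+\langle r-1\rangle$ and $s_2+\langle r-1\rangle$ lie in the same $U_m$-orbit of $ZD\mbox{iv}(\Sigma)/\langle r-1\rangle$ under the action of part (2); indeed this condition depends only on the cosets, by the preservation established above. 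Hence within each fixed $r$ the classes are in bijection with these orbits, and $Aut_v(QD_{2m})$ is partitioned by the pairs $(r,A)$ as stated. The one point deserving care is that this orbit relation is genuinely an equivalence relation — reflexivity from $f=1$, symmetry from replacing $f$ by $f^{-1}\in U_m$, transitivity from composing units — which is what guarantees the partition is well defined. This is the only place where the group structure of $U_m$ (and not merely its set action) is essential, and I expect it to be the sole real obstacle, the remaining verifications being the routine identity of part (1) and the subgroup-preservation checks of part (2).
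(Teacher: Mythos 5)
Your proposal is correct and follows essentially the same route as the paper: part (1) via the telescoping identity $(r-1)(r^{v-1}+\cdots+r+1)=r^{v}-1\equiv 0 \pmod{n}$, part (2) via the multiplication action of $U_{m}$ on $\mathbb{Z}_{n}$ stabilizing the relevant subgroups so that it descends to the cosets, and part (3) by reading Proposition \ref{f11} together with Proposition \ref{f2} as the statement that, for fixed $r$, the classes correspond to $U_{m}$-orbits on $ZD\mbox{iv}(r^{v-1}+\cdots+r+1)/\langle r-1\rangle$. Your write-up is merely more explicit than the paper's (which justifies part (2) by noting that subgroups of the cyclic group $\mathbb{Z}_{n}$ are characteristic under $U_{m}=Aut(\mathbb{Z}_{n})$, and dispatches part (3) in one line), so no substantive difference or gap exists.
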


\begin{proof}
 \begin{enumerate}
 \item We know that
 $(r-1).(r^{v-1}+r^{v-2}+\cdots +r+1)=r^{v}-1\equiv0~~(\mbox{mod~n})$
 $\Rightarrow(r-1).(r^{v-1}+r^{v-2}+\cdots +r+1)\equiv0~~(\mbox{mod~n})$
 $\Rightarrow(r-1)\in ZD\mbox{iv}(r^{v-1}+r^{v-2}+\cdots +r+1)$
 $\Rightarrow \langle r-1\rangle \leq ZD\mbox{iv}(r^{v-1}+r^{v-2}+\cdots +r+1).$
 \item We simply recognize that $U_{m}=Aut(\mathbb{Z}_{n})$ acts on $\mathbb{Z}_{n}$ by multiplication and since every subgroup is cyclic, $U_{m}$ must stabilize the subgroups of $\mathbb{Z}_{n}.$
 \item It is simply Proposition \ref{f11} in terms of the $U_{m}$-action on $\frac{ZD\mbox{iv}(r^{v-1}+r^{v-2}+\cdots +r+1)}{\langle r-1\rangle}.$
 \end{enumerate}
 \end{proof}
\begin{expl}
Let $G=QD_{40}$,and $v=2$. Then $U_{20}=\{ 1,3,7,9,11,13,17,19\}$ and $\mathbb{Z}_{10}=\{0,1,2,\ldots,9\}.$
\begin{enumerate}
  \item Since $v=2$, so $R_{20}^{2}=\{ 1,9,11,19\}.$ Take $r=9$ then $\langle9-1\rangle=\langle8\rangle=\{0,2,4,6,8\}$ and $ZDiv(10)=\{ 0,1,2,\ldots ,9\}.$ So, $\{ 0, 2, 4, 6, 8\}\subseteq \{ 0,1,2,\ldots ,9\}$
  \item For $U_{20}$ action on the cosets $\frac{ZDiv(r+1)}{\langle r-1\rangle}.$ Take $r=9$ then
  $\frac{ZDiv(r+1)}{\langle r-1\rangle}=\frac{ZDiv(10)}{\langle 8\rangle}=\frac{E=\{ 0,1,2,\ldots ,9\}}{F=\{0,2,4,6,8\}}=x+F~\forall~x\in E$ provide two sets $F$ and $F_{1}=\{1,3,5,7,9\}$.
  Now, the action of $U_{20}$ on the cosets $\frac{ZDiv(10)}{\langle 8\rangle}$ has two sets.
  $\frac{ZDiv(10)}{\langle 8\rangle}= y.F=F~\forall~y \in U_{20}$ and $\frac{ZDiv(10)}{\langle 8\rangle}=y.F_{1}=F_{1}~\forall~y \in U_{20}$
  \item The action of $U_{20}$ on $\frac{ZDiv(10)}{\langle 8\rangle}=F, F_{1}.$ Then the number of equivalence classes of $Aut(QD_{40})$ indexed by the pair $(r,~A)$ where $r \in R^{2}_{20}$ and $A$ is an orbit of $U_{20}$ on $\frac{ZDiv(10)}{\langle 8\rangle}$ is 8.
\end{enumerate}
\end{expl}
\begin{thm} \label{f16}
 If $m$ is a fixed positive integer and $r\in R^{v}_{m},$ then the number of orbits of $U_{m}$ on $\frac{ZD\mbox{iv}(r^{v-1}+r^{v-2}+\cdots +r+1)}{\langle r-1\rangle}$ is equal to the number of divisors of $\frac{\gcd(r-1,n).\gcd(r^{v-1}+r^{v-2}+\cdots +r+1,n)}{n}.$
\end{thm}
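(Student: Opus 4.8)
The plan is to recognise the quotient $\frac{ZD\mbox{iv}(\Sigma)}{\langle r-1\rangle}$, where I abbreviate $\Sigma:=r^{v-1}+r^{v-2}+\cdots+r+1$, as a \emph{cyclic} group, to compute its order explicitly, and then to reduce the orbit count to the classical description of the orbits of $Aut(\mathbb{Z}_N)$ on a cyclic group $\mathbb{Z}_N$. First I would nail down the order. Recall that $|ZD\mbox{iv}(\Sigma)|=\gcd(\Sigma,n)$, while $\langle r-1\rangle$ is the cyclic subgroup of $\mathbb{Z}_n$ generated by $r-1$, of order $n/\gcd(r-1,n)$. Proposition \ref{f15}(1) guarantees $\langle r-1\rangle\leq ZD\mbox{iv}(\Sigma)$, so the quotient is well defined and, by Lagrange, has order
\[
N:=\frac{|ZD\mbox{iv}(\Sigma)|}{|\langle r-1\rangle|}=\frac{\gcd(r-1,n)\,\gcd(\Sigma,n)}{n},
\]
which is precisely the integer whose divisors we must count. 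Since $ZD\mbox{iv}(\Sigma)$ is a subgroup of the cyclic group $\mathbb{Z}_n$ it is cyclic, hence so is the quotient $Q:=ZD\mbox{iv}(\Sigma)/\langle r-1\rangle\cong\mathbb{Z}_N$; the same computation shows $N\mid n$.

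Next I would analyse the $U_m$-action. By Proposition \ref{f15}(2) each $u\in U_m$ acts on $Q$ as a group automorphism, since multiplication by $u$ stabilises both subgroups. Tracking this action through the identification $Q\cong\mathbb{Z}_N$, I would show it is nothing but multiplication by $u\bmod N$; that is, the action factors through the reduction homomorphism $\rho:U_m\to Aut(\mathbb{Z}_N)=U_N$, $u\mapsto u\bmod N$ (well defined because $N\mid n$ forces $\gcd(u,N)=1$). The crucial point is that $\rho$ is surjective: every unit modulo $N$ lifts to a unit modulo $n$. This is a standard fact, proved by taking any integer representative of the given class modulo $N$ and using the Chinese Remainder Theorem to adjust it modulo the primes dividing $n$ but not $N$, so that the resulting lift is coprime to $n$.

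Finally, because $\rho$ is onto, the $U_m$-orbits on $Q$ coincide with the orbits of the full automorphism group $Aut(\mathbb{Z}_N)$ acting on $\mathbb{Z}_N$. In a cyclic group two elements lie in the same automorphism orbit if and only if they generate the same subgroup, i.e. have the same order, and for each divisor $d\mid N$ there is exactly one such orbit, consisting of the $\varphi(d)$ elements of order $d$. Hence the number of orbits equals the number of divisors of $N=\frac{\gcd(r-1,n)\,\gcd(\Sigma,n)}{n}$, which is the assertion.

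I expect the genuine obstacle to be the middle step: confirming that the induced action of $U_m$ on the subquotient $Q$ really is the \emph{full} reduction-mod-$N$ action, and that this reduction $U_m\to U_N$ is surjective. Once $Q$ is identified as $\mathbb{Z}_N$ with $N$ as above, the order count and the classical enumeration of automorphism orbits on a cyclic group are routine.
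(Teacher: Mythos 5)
Your proof is correct, and it reaches the count by a route that differs in execution from the paper's, in a way worth recording. Writing $\Sigma$ for $r^{v-1}+r^{v-2}+\cdots+r+1$ and $N=\frac{\gcd(r-1,n)\gcd(\Sigma,n)}{n}$: the paper stays upstairs in $\mathbb{Z}_n$; it notes that $U_m$-orbits on $\mathbb{Z}_n$ are indexed by subgroups, asserts that the orbits on the quotient $ZD\mbox{iv}(\Sigma)/\langle r-1\rangle$ are therefore indexed by the intermediate subgroups $\langle r-1\rangle \leq L \leq ZD\mbox{iv}(\Sigma)$, and counts these through the isomorphism between the subgroup lattice of $\mathbb{Z}_n$ and the divisor lattice of $n$, where the interval between the divisors $\gcd(r-1,n)$ and $n/\gcd(\Sigma,n)$ is isomorphic to the divisor lattice of $N$. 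You instead work downstairs: you compute $|Q|=N$ by Lagrange, identify the quotient as the cyclic group $\mathbb{Z}_N$, show the $U_m$-action factors through the reduction homomorphism $U_m\to U_N$, prove that reduction surjective by lifting units via the Chinese Remainder Theorem, and then invoke the classical fact that $Aut(\mathbb{Z}_N)$ has exactly one orbit on $\mathbb{Z}_N$ per divisor of $N$. The two arguments are equivalent, but yours makes explicit precisely the step the paper passes over with its ``Thus'': for the orbits on the quotient to be indexed by intermediate subgroups, one needs the induced action of $U_m$ on the subquotient to be the \emph{full} automorphism action, i.e., that any two cosets generating the same subgroup of the quotient lie in a single $U_m$-orbit. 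That is exactly the surjectivity of $U_m\to U_N$, which your CRT argument supplies and which the paper's lattice bookkeeping silently assumes. So your proposal is not only correct but, on this point, more complete than the printed proof; what the paper's approach buys in return is brevity, since it never needs to name $N$ as an order or discuss reduction maps at all.
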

\begin{proof}
The $U_{m}$-orbit on $\mathbb{Z}_{n}$ are indexed by the subgroups of  $\mathbb{Z}_{n}.$ Thus the $U_{m}$-orbit
on $\frac{ZD\mbox{iv}(r^{v-1}+r^{v-2}+\cdots +r+1)}{\langle r-1\rangle}$ are indexed by subgroups $L\leq\mathbb{Z}_{n}$ such that $$\langle r-1\rangle \leq L \leq ZD\mbox{iv}(r^{v-1}+r^{v-2}+\cdots +r+1).$$ It is well known that the subgroup lattice of $\mathbb{Z}_{n}$ is isomorphic to the divisor lattice of $n.$ The subgroup $\langle r-1\rangle$ corresponds to the divisor $\gcd(r-1,n)$ and the subgroup $ZD\mbox{iv}(r^{v-1}+r^{v-2}+\cdots +r+1)$ correspond to the divisor $\frac{n}{\gcd(r^{v-1}+r^{v-2}+\cdots +r+1,n)},$ and the subgroups between these groups correspond to the divisor of $n$ between $\gcd(r-1,n)$ and $\frac{n}{\gcd(r^{v-1}+r^{v-2}+\cdots +r+1,n)}$ in the divisor lattice. Finally, it is known that this sub-lattice of the divisor of $n$ is isomorphic to the divisor lattice of $$\frac{\gcd(r-1,n)}{\frac{n}{\gcd(r^{v-1}+r^{v-2}+\cdots +r+1,n)}}=\frac{\gcd(r-1,n).\gcd(r^{v-1}+r^{v-2}+\cdots +r+1,n)}{n}.$$
\end{proof}

\section{Fixed Ssts and Symmetric Spaces of Automorphisms}
In this section if $\theta$ is an automorphism the fixed-point set $H_{\theta}$ and the symmetric space $Q_{\theta}$ will be discussed, where $H_{\theta}=\{x\in G \mid \theta(x)=x\}$ and $Q_{\theta}=\{g\in G \mid g=x\theta(x)^{-1}~\mbox{for some}~x\in G\}$. When $\theta$ is understood to be fixed the subscript from our notation has been dropped.

The following theorem characterizes the sets $H$ and $Q$ in the case of quasi-dihedral groups.

\begin{thm} \label{f4}
If $\theta=rx+s$ is an automorphism of $G=QD_{2m}$ of finite order, then
$H=\{a^{j}\mid j(r+2s-1)\equiv0~(mod~n)\}\cup\{a^{j^{'}}b\mid j^{'}(r+2s-1)\equiv-s~(mod~n)\}$
and $Q=\{a^{j}\mid j\in \langle r+2s-1\rangle\cup [l(1-r)-s]~(mod~n)\}$
where,$~~j^{'}$ and $l$  are even numbers.
\end{thm}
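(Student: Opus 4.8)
The plan is to push everything into the normal form $a^{j}b^{t}$ with $0\le j<m$ and $t\in\{0,1\}$, and to treat the two values of $t$ separately, since the action formula~\ref{e1} preserves the $b$-component. For a fixed automorphism $\theta=rx+s$, evaluating $\theta$ on each normal-form element reduces every condition defining $H$ or $Q$ to a linear congruence in the exponent of $a$, which I can then solve. Throughout I would keep in mind that the exponent of $a$ is read modulo the order $m=2n$ of $a$, and that $s$ is even, so that $ns\equiv0\ (\mathrm{mod}\ m)$; this is the identity that lets me rewrite the raw $(2-n)s$ contribution coming from~\ref{e1} and collapse the final answer to the stated congruences modulo $n$.

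For $H$ I would impose $\theta(a^{j}b^{t})=a^{j}b^{t}$. When $t=0$ the $b$-part is trivial and~\ref{e1} yields a congruence of the form $(r-1)j+(\text{term in }s)\equiv0$, which after the reduction above I rearrange into $j(r+2s-1)\equiv0\ (\mathrm{mod}\ n)$. When $t=1$ the constant $s$ from $\theta(b)=a^{s}b$ survives, producing $j'(r+2s-1)\equiv -s\ (\mathrm{mod}\ n)$. I would then account for the parity restriction: since $r$ is odd (as $\gcd(r,m)=1$ with $m$ even) and $s$ is even, and since $|a^{j'}b|$ depends on the parity of $j'$ by Lemma~\ref{f1}(2), the admissible reflection-type fixed points are exactly those with $j'$ even. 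Taking the union of the two solution sets gives the claimed description of $H$.

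For $Q$ I would run $x$ over the same two families and compute $x\theta(x)^{-1}$ explicitly, the key observation being that in both cases the product lands back inside $\langle a\rangle$, so $Q$ consists only of powers of $a$. For $x=a^{l}$ this is immediate and the exponent has the form $l(1-r)$ together with a term in $s$; as $l$ varies this sweeps out the cyclic family recorded by $\langle r+2s-1\rangle$. For $x=a^{l}b$ I would use $b^{2}=1$ together with the commutation rule $ba^{i}=a^{i(n-1)}b$ of Lemma~\ref{f1}(1) to move the two copies of $b$ past the powers of $a$ and annihilate them; tracking the resulting exponent modulo $m$ (again via $ns\equiv0$) gives $a^{l(1-r)-s}$ with $l$ even, which is the second family. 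The union of the two families is the asserted $Q$.

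The hard part will be the exponent bookkeeping rather than any conceptual difficulty. Specifically, the non-abelian relation $ba^{i}=a^{i(n-1)}b$ makes the inverse $\theta(x)^{-1}$ and the product $x\theta(x)^{-1}$ sensitive to the parity of the exponent of $a$ (an element $a^{p}b$ is its own inverse when $p$ is even but has order $4$ when $p$ is odd), so the computation of $Q$ genuinely splits on parity, and this is where the ``even $l$'' condition is forced. Equally delicate is passing between the natural modulus $m=2n$ in which all exponents live and the modulus $n$ in which the theorem is stated; I would isolate the identity $ns\equiv0\ (\mathrm{mod}\ m)$ and verify that each congruence survives the reduction, since that is the step most likely to conceal a stray sign or factor of two.
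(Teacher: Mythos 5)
Your overall route --- normal form, case split on $t$, linear congruences, Lemma \ref{f1} to move $b$ past powers of $a$ --- is the same route the paper takes, but there is a genuine gap at the decisive algebraic step, and it is not mere exponent bookkeeping. In the stated theorem, $s$ enters the congruences with coefficient $j$ (the multiplier is $r+2s-1$), whereas in formula \ref{e1}, which you commit to, the $s$-contribution to $\theta(a^{j}b^{t})$ is a constant independent of $j$. Concretely, for $t=0$ formula \ref{e1} gives $\theta(a^{j})=a^{rj+(2-n)s}$, so imposing $\theta(a^{j})=a^{j}$ yields $(r-1)j\equiv-(2-n)s\pmod{m}$, and your identity $ns\equiv0\pmod{m}$ (valid, since $s$ is even) only converts this to $(r-1)j\equiv-2s\pmod{m}$. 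No rearrangement transfers the constant $2s$ onto the coefficient of $j$: the congruences $(r-1)j\equiv-2s$ and $j(r+2s-1)\equiv0\pmod{n}$ define different sets in general. The same obstruction breaks your $t=1$ case of $H$ (formula \ref{e1} yields $(r-1)j'\equiv-s$, not $(r+2s-1)j'\equiv-s$) and your $Q$ computation for $x=a^{l}$ (one gets $a^{l}\theta(a^{l})^{-1}=a^{l(1-r)-2s}$, so as $l$ varies the exponents sweep the coset $-2s+\langle r-1\rangle$, not the subgroup $\langle r+2s-1\rangle$). The paper's own proof produces the multiplier $r+2s-1$ only because it evaluates $\theta(a^{j})$ as $a^{[r+(2-n)s]j}$, i.e.\ with the translation part scaled by $j$ --- a formula different from \ref{e1}. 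Since your plan uses \ref{e1} as written (and indeed any reading under which $\theta$ is a homomorphism with $\theta(a)=a^{r}$ forces the multiplier $r-1$), carried out honestly it cannot terminate at the stated description of $H$ and $Q$; only the second family $[l(1-r)-s]$ of $Q$, where the paper itself uses \ref{e1}, comes out as you describe.

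The parity arguments are also not sound as stated. For $H$ you claim the fixed reflections are exactly those with $j'$ even because $|a^{j'}b|$ depends on the parity of $j'$; this is a non sequitur --- an automorphism may fix elements of any order, and the congruence you derive does not force $j'$ even. For $Q$, evenness of $l$ is not ``forced'' by the parity-sensitivity of inverses: when $p$ is odd, $x=a^{p}b$ still satisfies $x\theta(x)^{-1}\in\langle a\rangle$ (using $(a^{p}b)^{-1}=a^{n+p}b$ for odd $p$), so such $x$ contribute elements to $Q$ as well, and a complete proof must compute these and either absorb them into the stated families or justify their exclusion. The paper's proof simply never considers odd exponents there; your sketch inherits that omission rather than resolving it.
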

\begin{proof}
Since, $\theta$ is an automorphism and
$\theta(a^{j}b^{t})=a^{rj+[(t-1)n-(t-2)]s}b^{t}.$
First we will find set the $H.$ Let us consider $a^{j}\in G,$ then by definition of $H$, we have $\theta(a^{j})=a^{j},$
$ \theta(a^{j})= a^{[r+(2-n)s]j}=a^{j},$
$\Rightarrow n\mid j(r+2s-1).$

Now consider $a^{j^{'}}b\in G,$ then $ \theta(a^{j^{'}}b)=a^{j^{'}}b$ so,
$a^{j^{'}}b=a^{[r+(2-n)s]j^{'}}a^{s}b,$
thus $ n\mid(r+2s-1)j^{'}+s.$
Hence, $H=\{a^{j}\mid j(r+2s-1)\equiv0~(\mbox{mod~n})\}\cup\{a^{j^{'}}b\mid j^{'}(r+2s-1)\equiv-s~(\mbox{mod~n})\},$
where $j^{'}$ is even number.

Now we will find the set $Q.$
L $a^{j} \in G$ be an arbitrary element and let $a^{i}$ be a fixed element of $G$, where $j \neq i$, then by definition of $Q$,
$a^{j}=a^{i} \theta (a^{i})^{-1}=a^{i}\theta(a^{-i})=a^{i}.a^{-[r+(2-n)s]i}=a^{i[1-r-(2-n)s]}$
$\Rightarrow j\equiv i[1-(r+2s)]~~(\mbox{mod~n})$ $~~~~~~\Rightarrow j\in \langle r+2s-1\rangle.$

Now consider $a^{l}b\in G,~~~~~$ where $l$ is even number, then
$a^{j}=(a^{l}b)\theta(a^{l}b)^{-1}=(a^{l}b)\theta(a^{l}b)=(a^{l}b)(a^{rl+s}b)=a^{l}(ba^{rl+s})b=a^{l}a^{(rl+s)(n-1)}bb=a^{l+(rl+s)(n-1)}$
            $\Rightarrow j = l+(rl+s)(n-1)$   $\Rightarrow j=l[1+r(n-1)]+s(n-1)$ $~~~~\Rightarrow j\equiv l(1-r)-s~~~(\mbox{mod~n}).$

            So, $Q=\{a^{j}\mid j\in \langle r+2s-1\rangle\cup [l(1-r)-s]~(\mbox{mod~n})\},$  where $~l$ is an even numbers.
\end{proof}
\begin{expl}
Let us consider $G=QD_{64}$ and $\theta=3x+12 \in Aut(D_{64}),$ then
$H=\{a^{j}\mid 10j\equiv0~(mod~16)\}\cup\{a^{j^{'}}b\mid 10j^{'}\equiv-4~(mod~16)\}=\{1,a^{8},a^{2}b,a^{10}b\}$ and
$Q=\{a^{j}\mid j\in \langle 10\rangle\cup (14j^{'}+4)~(mod~16)\}=\{1,a^{2},a^{4},\ldots,a^{14}\}$
\end{expl}
            Using the descriptions of $H$ and $Q$ , we obtain further results as well.

\begin{cor} \label{f5}
If $\theta=rx+s$ is an automorphism of $QD_{2m}$ of finite order, where $m=4k,~n=2k$ and $k$ is even, then $H$ will be cyclic or non-cyclic according to the following conditions
\begin{enumerate}
 \item Let $\gcd(r+2s-1,~n)=d_{1}$. If $s=2w+1$ is odd and $d_{1}\nmid w$ then $H$ will be cyclic that is
 $H \cong ZD\mbox{iv}(r+2s-1)$ ~~~~~\mbox{and}~~ $\mid H \mid =d_{1}$
 \item Let $\gcd(r+2s-1,~k)=d_{2}$. If  $s=2w$ is even and $d_{2}\mid w$  then we have two cases
 \begin{description}
 \item[a)] If $w$ is odd, then $H$ will be cyclic
 $\mid H \mid =d_{1}$ $~~~~~~\mbox{and}~~H \cong ZD\mbox{iv}(r+2s-1).$
\item[b)] If $w$ is even
\begin{description}
\item[i)] If $d_{2} \nmid w$ then $H$ will be cyclic
$\mid H \mid =d_{1}$ $~~~~\mbox{and}~~H \cong ZD\mbox{iv}(r+2s-1).$
\item[ii)] If $d_{2}\mid w$ then $H$ will be non-cyclic. We have two cases
\begin{description}
\item[I-)] if $4\nmid k$ then
$\mid H \mid =\frac{3}{2}d_{1}$
$H=\{\langle a^{\frac{k}{2}}\rangle ,a^{2l}b,~a^{2q}b \mid l\neq q~ \mbox{for some}~l,q\in \{1,2,...k\}\}.$
It is not a subgroup of $G.$
\item[II-)] if $4\mid k$ then
$\mid H \mid =2d_{1}$  $~\mbox{and}~~H \cong ZD\mbox{iv}(r+2s-1)\times Z_{2}.$
\end{description}
\end{description}
\end{description}
\end{enumerate}
\end{cor}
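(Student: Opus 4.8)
The plan is to exploit the decomposition of $H$ furnished by Theorem \ref{f4} into its ``rotation'' part $H_{\mathrm{rot}}=\{a^{j}\mid j(r+2s-1)\equiv0~(\mathrm{mod}~n)\}$ and its ``reflection'' part $H_{\mathrm{ref}}=\{a^{j'}b\mid j'\text{ even},\ j'(r+2s-1)\equiv-s~(\mathrm{mod}~n)\}$, and to analyze each separately. Writing $c=r+2s-1$, the rotation part is exactly the solution set of $cj\equiv0~(\mathrm{mod}~n)$, which by the defining property of $ZD\mbox{iv}$ is $ZD\mbox{iv}(c)$; it is the cyclic subgroup $\langle a^{n/d_{1}}\rangle\le\langle a\rangle$ of order $d_{1}=\gcd(c,n)$, irrespective of the remaining hypotheses. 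Hence the entire question of cyclicity --- indeed of whether $H$ is a subgroup at all --- is governed by $H_{\mathrm{ref}}$, and the bulk of the argument is a careful study of that set.

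First I would determine precisely when $H_{\mathrm{ref}}$ is empty. The congruence $cj'\equiv-s~(\mathrm{mod}~n)$ is solvable if and only if $d_{1}\mid s$, and one must further impose the parity constraint that $j'$ be even. Translating this constraint through the substitutions $s=2w$ or $s=2w+1$ and the divisibility of $w$ by $d_{1}$ and by $d_{2}=\gcd(c,k)$ should show that under the hypotheses of cases (1), (2a) and (2b)(i) there is no admissible even $j'$, so that $H=H_{\mathrm{rot}}\cong ZD\mbox{iv}(c)$ is cyclic of order $d_{1}$. This disposes of every ``cyclic'' conclusion simultaneously.

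When $H_{\mathrm{ref}}\neq\emptyset$, each element $a^{j'}b$ is an involution by Lemma \ref{f1}(2), since $j'$ is even, so $H$ is a union of a cyclic group of order $d_{1}$ with a set of involutions. To test closure I would compute, for two reflections $a^{j'_{1}}b,\,a^{j'_{2}}b\in H_{\mathrm{ref}}$, the product $(a^{j'_{1}}b)(a^{j'_{2}}b)=a^{\,j'_{1}+j'_{2}(n-1)}$ using Lemma \ref{f1}(1), and check whether its exponent satisfies the rotation congruence modulo the full order $m=2n$ of $a$. The even--odd bookkeeping of exponents modulo $4k$ rather than merely modulo $n=2k$ is essential here, and it is exactly at this point that the hypothesis $4\mid k$ versus $4\nmid k$ intervenes. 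When $4\mid k$ the products of reflections always land back in $H_{\mathrm{rot}}$ and $H_{\mathrm{ref}}$ contributes one independent involution, so $H$ is a genuine subgroup with $H\cong ZD\mbox{iv}(c)\times\mathbb{Z}_{2}$ and $|H|=2d_{1}$; when $4\nmid k$ some such product escapes $H_{\mathrm{rot}}$, the set fails to be closed, and one recovers the exceptional configuration of size $\frac{3}{2}d_{1}$ described in case (2b)(ii)(I).

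I expect the main obstacle to be this closure computation in the non-cyclic regime: controlling the exponent $j'_{1}+j'_{2}(n-1)$ modulo $m=4k$ while the defining congruences are only visible modulo $n=2k$, and extracting cleanly from this the dichotomy $4\mid k$ versus $4\nmid k$ together with the correct count of admissible even $j'$. Once the number of reflections and the behavior of their pairwise products are pinned down, identifying the resulting isomorphism type --- cyclic, $ZD\mbox{iv}(c)\times\mathbb{Z}_{2}$, or a non-closed set --- is routine.
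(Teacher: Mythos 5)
Your reduction of every ``cyclic'' case to the emptiness of the reflection part is sound, and it is essentially the only viable route, since the paper states Corollary \ref{f5} with no proof at all (it is followed immediately by a numerical example). In fact this part can be made fully precise: writing $c=r+2s-1$ and $s=2w$, an even solution $j'=2i$ of $cj'\equiv -s\pmod n$ is the same thing as a solution of $ci\equiv -w\pmod k$, which exists if and only if $d_{2}=\gcd(c,k)$ divides $w$; and when $s$ is odd, both $c$ and $n$ are even, so $d_{1}=\gcd(c,n)$ is even and $d_{1}\nmid s$, whence there are no solutions whatsoever. This settles (1), (2a) and (2b)(i) and shows $H_{\mathrm{ref}}\neq\emptyset$ exactly under the hypothesis of (2b)(ii).

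The genuine gap is the mechanism you propose for the dichotomy I-) versus II-). First, a closure computation cannot distinguish the two cases: for $a^{j_{1}'}b,\ a^{j_{2}'}b\in H_{\mathrm{ref}}$ the product is $a^{j_{1}'+j_{2}'(n-1)}$, whose exponent is $j_{1}'-j_{2}'$ modulo $n$, and $c(j_{1}'-j_{2}')\equiv -s-(-s)\equiv 0\pmod n$; so under the mod-$n$ bookkeeping of Theorem \ref{f4} the product of two reflections \emph{always} lands in $H_{\mathrm{rot}}$, in both cases. (Recall also that the honest fixed-point set of an automorphism is automatically a subgroup, so any argument that turns on $H$ failing to be closed is an artifact of the mod-$n$ description, not of group theory in $QD_{2m}$.) Second, and decisively, the true dichotomy is not $4\mid k$ versus $4\nmid k$: the number of reflections is $|H_{\mathrm{ref}}|=d_{2}$ (the number of solutions of $ci\equiv -w\pmod k$), and since $c$ and $k$ are both even one has either $d_{1}=2d_{2}$, giving $|H|=d_{1}+d_{2}=\frac{3}{2}d_{1}$, or $d_{1}=d_{2}$, giving $|H|=2d_{1}$; which of these occurs is governed by whether the $2$-adic valuation of $c$ exceeds that of $k$, not by $4\mid k$. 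The paper's own example already shows this: in $QD_{48}$ ($k=6$, so $4\nmid k$) the automorphism $\theta_{4}=5x+8$ has $d_{1}=4$, $d_{2}=2$ and $|H_{4}|=6=\frac{3}{2}d_{1}$, while $\theta_{5}=7x+4$ has $d_{1}=d_{2}=2$ and $|H_{5}|=4=2d_{1}$; both lie on the same side of your proposed $4\mid k$ test yet realize the two different conclusions. So the step you flag as the ``main obstacle'' is not merely hard but impossible as posed: no closure or parity argument will extract the $4\mid k$ criterion, because that criterion is not what controls the answer (the case structure of the corollary as printed is contradicted by the paper's own examples). A correct completion of your plan would replace I-)/II-) by the condition $d_{1}=2d_{2}$ versus $d_{1}=d_{2}$ and then simply count.
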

\begin{expl}
Let $G=QD_{48},$ then $\gcd(r,m)=1$ $~\Rightarrow r\in U_{24}=\{ 1,5,7,11,13,17,19,23\}$ and $s \in \mathbb{Z}_{12}=\{ 0,1,2,\ldots ,11\}.$
\begin{enumerate}
\item Consider  $\theta_{1}=7x+11 \in Aut(QD_{48})$ then $r=7,~s=2(5)+1=11$ $~~\Rightarrow r+2s-1=4$ and $d_{1}=\gcd(28,12)=4$ $\Rightarrow 4\nmid5$ then $H_{1}=\{1,a^{3},a^{6},a^{9}\}$ is cyclic. and $ZDiv4=\{0,3,6,9\}$ $\Rightarrow H_{1}\cong ZDiv4$ and $ O(H_{1})=d_{1}=4$
\item \begin{description}
\item[a)] Take  $\theta_{2}=11x+10\in Aut(QD_{48})$ then $r=11,~s=2(5)=10$
$\Rightarrow r+2s-1=6$ and $d_{1}=\gcd(6,12)=6$, $d_{2}=\gcd(6,6)=6$
then $H_{2}=\{1,a^{2},a^{4},\ldots, a^{10}\}$ is cyclic and $ZDiv6=\{0,2,4,6,8,10\}$
$\Rightarrow H_{2}\cong ZDiv6$ and $ O(H_{2})=d_{1}=6$
\item[b)] \begin{description}
\item[i)] For  $\theta_{3}=11x+4\in Aut(QD_{48})$ then $r=11,~s=2(2)=4$
$\Rightarrow r+2s-1=6$ and $d_{1}=\gcd(6,12)=6$, $d_{2}=\gcd(6,6)=6$ $\Rightarrow 6\nmid -2$ then $H_{3}=\{1,a^{2},a^{4},\ldots, a^{10}\}$ is cyclic and $ZDiv6=\{0,2,4,6,8,10\}$ $\Rightarrow H_{3}\cong ZDiv6$ and $ O(H_{3})=d_{1}=6$
\item[ii)] \begin{description}
\item[I-] For $\theta_{4}=5x+8\in Aut(QD_{48})$ then $r=5,~s=2(4)=8$ $~~\Rightarrow r+2s-1=8$ and $d_{1}=\gcd(8,12)=4$, $d_{2}=\gcd(8,6)=2$ $\Rightarrow 2\mid -4$ then $H_{4}=\{1,a^{3},a^{6},a^{9},a^{2}b,a^{8}b\}$ is non-cyclic and $ O(H_{4})=\frac{3}{2}.d_{1}=6$
\item[II-] Take $\theta_{5}=7x+4\in Aut(QD_{48})$ then $r=7,~s=2(2)=4$ $~~\Rightarrow r+2s-1=2$ and $d_{1}=\gcd(2,12)=2$, $d_{2}=\gcd(2,6)=2$ $\Rightarrow 2\mid -2$ then  $H_{5}=\{1,a^{6},a^{4}b,a^{10}b\}$ is non-cyclic and
 $O(H_{5})=2.d_{1}=4$
 \end{description}
 \end{description}
 \end{description}
\end{enumerate}
\end{expl}

\begin{cor} \label{f6}
If $m=4k,~n=2k$ and $k$ is odd, then $H$ will be cyclic or non-cyclic according to the following conditions
\begin{enumerate}
\item Let $\gcd(r+2s-1,~n)=d_{1}$. If $s=2w+1$ is odd then $H$ will be cyclic
$\mid H \mid =d_{1}$ $~~~~~~\mbox{and}~~H \cong ZD\mbox{iv}(r+2s-1).$
\item Let $\gcd(r+2s-1,~n)=d_{1}$ . If $s=2w$ is even then
\begin{description}
\item[a)] If $d_{1}=n$ then $H$ will be cyclic
$\mid H \mid =d_{1}=n$ $~~~~~~\mbox{and}~~H \cong ZD\mbox{iv}(r+2s-1).$
\item[b)] If $d_{1}=2$ then $H$ will be non-cyclic
$~\mid H \mid =\frac{3}{2} d_{1}=\frac{3}{2}~(2)=3$
and $H=\{1,~a^{k},~a^{2l}b\}~\mbox{for some}~l\in \{1,2,...,k\}$
is not a subgroup of $G.$
\end{description}
\end{enumerate}
\end{cor}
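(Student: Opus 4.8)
The plan is to read $H$ off the characterization in Theorem \ref{f4} and to decide cyclicity by determining whether the reflection part of $H$ is empty. Writing $c = r+2s-1$, Theorem \ref{f4} gives $H = H_a \cup H_b$ with $H_a = \{a^j \mid jc \equiv 0 \pmod{n}\}$ and $H_b = \{a^{j'}b \mid j'c \equiv -s \pmod{n},\ j' \text{ even}\}$. The first fact I would record is that $c$ is \emph{even}: since $r \in U_m$ and $m = 4k$ is even, $r$ is odd, so $r-1$ is even and $2s$ is even, whence $c$ is even. On the rotation part, the assignment $a^j \mapsto j$ identifies $H_a$ with $ZD\mbox{iv}(c) = ZD\mbox{iv}(r+2s-1)$, which is cyclic of order $\gcd(c,n) = d_1$ by the Definition; thus whenever $H_b = \emptyset$ we obtain at once that $H = H_a \cong ZD\mbox{iv}(r+2s-1)$ is cyclic of order $d_1$.

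Part (1) then reduces to a parity count. The defining congruence for $H_b$ is $j'c \equiv -s \pmod{n}$ with $j'$ even; since $j'$ and $c$ are both even the left-hand side is even, while $n = 2k$ is even so reduction mod $n$ preserves parity. Hence if $s = 2w+1$ is odd, then $-s$ is odd and the congruence has no solution, forcing $H_b = \emptyset$ and giving part (1) immediately. If $s = 2w$ is even the parities are compatible and $H_b$ may be nonempty, so the argument must branch on $d_1 = \gcd(c,n)$, which is precisely the dichotomy of part (2).

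For part (2)(a), $d_1 = n$ means $n \mid c$, so the rotation congruence $jc \equiv 0 \pmod{n}$ holds identically and $H_a$ is the full admissible cyclic part, of order $n$, with $H_a \cong ZD\mbox{iv}(r+2s-1)$; I would then verify that under $n \mid c$ the reflection congruence admits no new element, so $H$ stays cyclic of order $n$. For part (2)(b), $d_1 = 2$ makes $H_a = \{1, a^k\}$, and the crux is to solve $j'c \equiv -s \pmod{n}$ over even $j'$: using $k$ odd together with $c$ even and $\gcd(c,n) = 2$, I would show this congruence admits exactly one admissible even residue $j' = 2l$, so that $H_b = \{a^{2l}b\}$ is a single reflection and $H = \{1, a^k, a^{2l}b\}$ has order $\frac{3}{2}d_1 = 3$.

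To conclude that $H$ is not a subgroup I would test closure directly: computing $a^k \cdot a^{2l}b = a^{k+2l}b$ and noting that $k$ is odd while $2l$ is even shows the exponent $k+2l$ is odd, so the product is a reflection of the form $a^{\mbox{odd}}b$, which by Lemma \ref{f1}(2) has order $4$ and is not among the three listed elements of $H$; alternatively, a $3$-element group would be $\mathbb{Z}_3$ and contain no involution, contradicting $a^{2l}b \in H$. I expect the genuine obstacle to be the solution count in (2)(b): establishing that the reflection congruence $j'(r+2s-1) \equiv -s \pmod{n}$ has precisely one even solution (and, in the cyclic regimes, none) demands careful bookkeeping of how the evenness of $j'$ and of $c$ interacts with $n = 2k$ under the hypothesis that $k$ is odd, and it is exactly this count that fixes $|H_b| \in \{0,1\}$ and thereby separates the cyclic from the non-cyclic cases.
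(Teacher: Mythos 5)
Note first that the paper offers no proof of Corollary \ref{f6} at all: it is stated bare and followed only by the numerical example in $QD_{40}$, so the only available route is the one you chose, namely reading $H$ off Theorem \ref{f4} and analysing the two congruences. Within that framework your part (1) is correct and complete: $\gcd(r,m)=1$ with $m$ even forces $r$ odd, so $c=r+2s-1$ is even; since $n=2k$ is even, reduction mod $n$ preserves parity, so $j'c\equiv -s\pmod{n}$ with $j',c$ even is unsolvable for odd $s$, giving $H=H_a\cong ZD\mbox{iv}(c)$ of order $d_1$. Your part (2)(b) is also sound, and the count you deferred does go through: writing $c=2c'$, $s=2w$, $j'=2l$, the reflection congruence becomes $2c'l\equiv -w\pmod{k}$, and $\gcd(2c',k)=1$ (because $k$ is odd and $d_1=\gcd(2c',2k)=2$ forces $\gcd(c',k)=1$), so $l$ is unique mod $k$ and $j'=2l$ is the unique even solution mod $n$; both of your non-closure arguments for ``$H$ is not a subgroup'' are valid.

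The genuine gap is in (2)(a). You write that you ``would then verify that under $n\mid c$ the reflection congruence admits no new element,'' but this verification cannot be carried out in general, because the claim is false precisely when $n\mid s$. If $n\mid c$, the congruence $j'c\equiv -s\pmod{n}$ reads $0\equiv -s\pmod{n}$: when $s\not\equiv 0\pmod{n}$ this is a contradiction and $H_b=\emptyset$ as you want, but when $s\equiv 0\pmod{n}$ it holds identically, every even $j'$ solves it, and $H$ contains all even reflections, hence is non-cyclic and has order larger than $n$. This degenerate case lies inside the scope of the corollary: both $\theta=x$ (the identity) and $\theta=(2k+1)x$ are finite-order automorphisms with $s=0$ and $r\equiv 1\pmod{n}$, hence $c\equiv 0$ and $d_1=\gcd(c,n)=n$, yet for both of them $H$ contains $\{a^{j'}b \mid j'\ \mbox{even}\}$ and is not cyclic. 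So clause (2)(a) is provable only under the additional hypothesis $s\not\equiv 0\pmod{n}$, in which case the verification is the one-line observation $0\not\equiv -s\pmod{n}$. Your proof should either make that hypothesis explicit or record this counterexample; the parity trick and the solution count, which carry the real content of your argument, are unaffected.
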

\begin{expl}
Let $G=QD_{40}$ where $m=20$, $n=10$, $k=5$. Since $\gcd(r,m)=1$ $~\Rightarrow r\in U_{20}=\{ 1,3,7,9,11,13,17,19\}$ and $s \in \mathbb{Z}_{10}=\{ 0,1,2,\ldots ,9\}.$
\begin{enumerate}
  \item Consider $\theta_{1}=3x+5 \in Aut(QD_{40})$ then $r=3,~s=5$ $~~\Rightarrow r+2s-1=2$ and $\gcd(2,10)=2$ then $H_{1}=\{1,a^{5}\}$ is cyclic and $ZDiv2=\{0,5\}$  $\Rightarrow H_{1}\cong ZDiv2$ and $\mid H_1 \mid =2$
  \item \begin{description}
          \item[a)] For  $\theta_{2}=7x+2\in Aut(QD_{40})$ then $r=7,~s=2$ $~~\Rightarrow r+2s-1=0$ and $\gcd(0,10)=10$ then $H_{2}=\{1,a^{2},a^{3},\ldots,a^{9}\}$ is cyclic and $ZDiv0=\{0,1,2,\ldots,9\}$ $\Rightarrow H_{2}\cong ZDiv0$ and $ \mid H_2 \mid=10$
          \item[b)] Take  $\theta_{3}=7x+4\in Aut(QD_{40})$ then $r=7,~s=4$ $~~\Rightarrow r+2s-1=4$ and $\gcd(4,10)=2$ then $H_{3}=\{1,a^{5},a^{4}b\}$ is non-cyclic and $\mid H_3 \mid =\frac{3}{2}.2=3$
        \end{description}
\end{enumerate}
\end{expl}
\begin{cor} \label{f7}
If $\theta=rx+s\in Aut_{v}(QD_{2m})$ be of finite order, where $m=4k,~n=2k$ and $k$ is even number, then $Q$ will be cyclic or non-cyclic according to the following conditions
\begin{enumerate}
  \item If $s=2w+1$ is odd, then $Q$ will be non-cyclic.
  \item Let$~~\gcd(r+2s-1,~n)=d_{1}$. If $s=2w$ is even, then
  \begin{description}
  \item[a)] If $d_{1}\mid(1-r)j^{'}-s~~~~~~~$ and
  \begin{description}
  \item[i)] if $r+2s-1\not\equiv k~~~~~(mod~n)~~~~$ then $Q$ will be cyclic.
  \item[ii)] If $r+2s-1\equiv k~~~~~(mod~n)$ we have two cases
\begin{itemize}
\item if $1-r\equiv k~~~~(mod~n)~~~~~$ then $Q$ will be cyclic.
\item If $1-r\not\equiv k~~~~~(mod~n)~~~~$ then $Q$ will be non-cyclic.
\end{itemize}
\end{description}
\item[b)] If $d_{1}\nmid (1-r)j^{'}-s~~~~~~~$ and
\begin{description}
 \item[i)] if $-s \not\equiv k~~~~~(mod~n)~~~~$ then $Q$ will be non-cyclic.
 \item[ii)] If $-s \equiv k~~~~~(mod~n)~~~~$ then $Q$ will be cyclic.
 \end{description}
 \end{description}
\end{enumerate}
\end{cor}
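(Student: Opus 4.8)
The plan is to read $Q$ off from Theorem~\ref{f4} and, in each case, decide whether the resulting subset of $\langle a\rangle$ is a cyclic subgroup. Write the exponent set of $Q$ as $A\cup B$, where $A=\langle r+2s-1\rangle\le\mathbb{Z}_n$ is the cyclic subgroup generated by $r+2s-1$ — equivalently the multiples of $d_1=\gcd(r+2s-1,n)$ — and $B=\{(1-r)l-s\mid l\text{ even}\}=-s+\langle 2(1-r)\rangle$ is a coset of $\langle 2(1-r)\rangle$. Since $Q\subseteq\langle a\rangle$ and every subgroup of a cyclic group is cyclic, $Q$ is cyclic exactly when the exponent set $A\cup B$ is a subgroup of $\mathbb{Z}_n$. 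The whole argument therefore reduces to testing, case by case, whether $B$ is absorbed into $A$ (so $Q=A$ is cyclic) or contributes a genuine extra coset (so $A\cup B$ fails to close and $Q$ is non-cyclic).

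First I would record the arithmetic forced by $k$ even: $n=2k\equiv0\pmod 4$, and $r$ is odd since $\gcd(r,m)=1$ with $m$ even, so $r+2s-1=(r-1)+2s$ is always even and $A$ consists of even residues. For part~(1), with $s$ odd every element $(1-r)l-s$ of $B$ is odd, so $B$ and $A$ have opposite parities; I would then show that $A\cup B$ is properly contained in the cyclic subgroup of $\mathbb{Z}_n$ it generates — equivalently that it is not closed under addition — which gives $Q$ non-cyclic.

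For part~(2), $s=2w$ is even and the containment $B\subseteq A$ is controlled precisely by the divisibility $d_1\mid(1-r)j'-s$, i.e. whether each exponent of $B$ is a multiple of $d_1$. In sub-case~(a) this holds, so one expects $Q=A$, cyclic; the only obstruction is the passage between residues $\bmod\,n$ and the true order $m=2n$ of $a$, detected by $r+2s-1\equiv k\pmod n$. When the generator equals $k=n/2$ one has $d_1=k$ and $A=\{0,k\}$, and whether the lift of $B$ lands in $A$ or forms a second coset is decided by the residue of $1-r$: $1-r\equiv k$ gives cyclic, $1-r\not\equiv k$ gives non-cyclic. In sub-case~(b) the divisibility fails, $B\not\subseteq A$, and generically $A\cup B\supsetneq A$ is not closed, so $Q$ is non-cyclic; the single exception is the coincidence $-s\equiv k\pmod n$, where $B$ collapses onto $k+\langle 2(1-r)\rangle$ and merges with $A$ into one subgroup, making $Q$ cyclic. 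Each leaf is then confirmed by the explicit test $a^{(1-r)l-s}\in A$ together with the residues of $r+2s-1$, $1-r$ and $s$ modulo $k$.

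The routine part is the closure and parity bookkeeping inside $\mathbb{Z}_n$ once a case is fixed. The real difficulty — and the source of the four special conditions involving $k$ — is the interplay between the mod-$n$ description of the exponents inherited from Theorem~\ref{f4} and the fact that $a$ has order $m=2n$, so that an exponent set which is a subgroup of $\mathbb{Z}_n$ need not lift to a subgroup of $\langle a\rangle$. I would manage this by fixing representatives in $\{0,1,\dots,n-1\}$, flagging whenever a generator or coset representative equals the half-order value $k$, and checking closure of the lift directly; this is where I expect the bulk of the care to be required.
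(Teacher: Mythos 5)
Your reduction is the right starting point: by Theorem \ref{f4} the exponent set of $Q$ is $A\cup B$ with $A=\langle r+2s-1\rangle$ and $B=-s+\langle 2(1-r)\rangle$, and (judging from the paper's examples, which list exponents as residues in $[0,n)$ and call sets such as $\{1,a^{4},a^{8}\}\subseteq QD_{48}$ cyclic) the operative test is whether $A\cup B$ is a subgroup of $\mathbb{Z}_{n}$; the paper itself offers no proof of this corollary, only a worked example, so your plan is judged on its own. But the execution goes wrong at the branch points. You read condition (a) as ``$B\subseteq A$'' (divisibility for \emph{every} even $j'$), whereas the paper's own example $\theta_{4}=11x+4$ on $QD_{48}$ has $d_{1}=6\nmid(1-r)\cdot 0-s=-4$ and is nevertheless placed in case (a): the condition is existential, i.e.\ $A\cap B\neq\emptyset$. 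Under your reading case (a) would give $Q=A$, always cyclic, so the non-cyclic branch of (a)(ii) could never occur; you patch this with the ``lifting to $\langle a\rangle$ of order $2n$'' story, but that contradicts both your own stated criterion and the paper's usage ($a^{4}a^{8}=a^{12}\notin\{1,a^{4},a^{8}\}$, yet that set is declared cyclic in case (a)(i)). The conditions involving $k$ are not a lifting phenomenon; they arise inside $\mathbb{Z}_{n}$ because $k$ is the unique element of order two there. Likewise in part (1) your parity remark alone does not finish: for $k$ odd the same parity split occurs yet $Q$ can be cyclic (compare Corollary \ref{f8}), so the closure failure must actually use $k$ even (e.g.\ via $B\subseteq -s+\langle 4\rangle$), a step you defer.

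The decisive gap is case (b): ``generically $A\cup B$ is not closed; the single exception is $-s\equiv k$'' is precisely the claim that needs proof, and it is false. Take $G=QD_{48}$ ($n=12$, $k=6$ even) and $\theta=5x+2$, an involution since $\theta^{2}=25x+12\equiv x\pmod{12}$. Here $r+2s-1=8$, $d_{1}=4$, $A=\{0,4,8\}$, and $(1-r)j'-s=-4j'-2\equiv 2\pmod{4}$, so the divisibility of (a) fails for every even $j'$ and we are in case (b); moreover $-s\equiv 10\not\equiv 6=k$, so your argument (and the corollary) declare $Q$ non-cyclic. But $B=\{2,6,10\}$, hence $A\cup B=\{0,2,4,6,8,10\}=\langle 2\rangle\leq\mathbb{Z}_{12}$: this $Q$ is literally the same set $\{1,a^{2},\ldots,a^{10}\}$ that the paper's example $\theta_{6}=5x+6$ (case (b)(ii)) calls cyclic. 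Since cyclicity is a property of the set $Q$ alone, two automorphisms with identical $Q$ cannot receive opposite classifications, and no proof of the stated dichotomy can exist. The correct criterion in case (b) is that $A\cup B$ is a subgroup if and only if $\langle 2(1-r)\rangle=A$ and $d_{1}\mid 2s$; the condition $-s\equiv k$ merely forces these two conditions when $A\cap B=\emptyset$, it is not equivalent to them. A rigorous run of your own plan --- testing closure of $A\cup B$ case by case --- would have uncovered this counterexample rather than confirmed the corollary; at the one point where the claim is not routine, you substitute ``generically'' for an argument.
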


\begin{expl}
Let $G=QD_{48}$ where $m=24$, $n=12$, $k=6$. Since $\gcd(r,m)=1$ $~\Rightarrow r\in U_{24}=\{ 1,5,7,11,13,17,19,23\}$ and $s \in \mathbb{Z}_{12}=\{ 0,1,2,\ldots ,11\}.$
\begin{enumerate}
 \item Consider $\theta_{1}=7x+11 \in Aut(QD_{48})$ then $r=7,$ $s=11$ $ \Rightarrow r+2s-1=4$
then $Q_{1}=\{1,~a,~a^{4},~a^{8}\}$ is non-cyclic.
\item \begin{description}
\item[a)] \begin{description}
\item[i)] For $\theta_{2}=5x+8\in Aut(QD_{48})$ then $r=5$, $s=8$
$ \Rightarrow r+2s-1=8 \not\equiv k$ and $d_{1}=\gcd(8,~12)=4$,
$(1-r)j^{'}-s=-4j^{'}-8$ at $j^{'}=2~ yeilds~~4\mid -16$
then $Q_{2}=\{1,~a^{4},~a^{8}\}$ is cyclic.
\item[ii)] Take $\theta_{3}=7x+6\in Aut(QD_{48})$ then $r=7$, $s=6$
$ \Rightarrow r+2s-1=6 \equiv k$ and $d_{1}=\gcd(6,~12)=6$,
$(1-r)j^{'}-s=-6j^{'}-6$ at $j^{'}=2~ yeilds~~~6\mid -18$
and $1-r=-6\equiv6\equiv k$ then $Q_{3}=\{1,a^{6}\}$ is cyclic.
And for $\theta_{4}=11x+4\in Aut(QD_{48})$ then $r=11$, $s=4$
$ \Rightarrow r+2s-1=6 \equiv k$ and $d_{1}=\gcd(6,~12)=6$,
$(1-r)j^{'}-s=-10j^{'}-4$ at $j^{'}=2~ yeilds~~~6\mid -24$
and $1-r=-10\equiv 2 \not\equiv k$ then $Q_{4}=\{1,~a^{4},~a^{6},~a^{8}\}$ is non-cyclic.
\end{description}
\item[b)] \begin{description}
\item[i)] Consider $\theta_{5}=x+10\in Aut(QD_{48})$ then $r=1$, $s=10$
$ \Rightarrow r+2s-1=8$ and $d_{1}=\gcd(8,~12)=4$,
$(1-r)j^{'}-s=0j^{'}-10\equiv2~~~\Rightarrow4\nmid 2$ and $-s=-10\equiv 2 \not\equiv k$
then $Q_{5}=\{1,~a^{2},~a^{4},~a^{8}\}$ is non-cyclic.
\item[ii)] For $\theta_{6}=5x+6\in Aut(QD_{48})$ then $r=5$, $s=6$
$ \Rightarrow r+2s-1=4$ and $d_{1}=\gcd(4,~12)=4$,
$(1-r)j^{'}-s=-4j^{'}-6$ for any $j^{'},$ $ ~d_{1}\nmid -4j^{'}-6$
and $-s=-6\equiv6\equiv k$ then $Q_{6}=\{1,~a^{2},~a^{4},\ldots,a^{10}\}$ is cyclic.
\end{description}
\end{description}
\end{enumerate}
\end{expl}

\begin{cor} \label{f8}
If $m=4k,~n=2k$ and $k$ is odd number, then $Q$ will be cyclic or non-cyclic according to the following conditions
\begin{enumerate}
  \item If $s=2w$ is even then $Q$ will be cyclic.
  \item If $s=2w+1$ is odd
  \begin{description}
  \item[a)] $Q$ will be non-cyclic
 if either $r+2s-1\equiv 0~~~(mod~n)~~$ or $~~~~1-r\equiv 0~~(mod~n).$
 \item[b)] $Q$ will be cyclic
 \begin{description}
 \item[i)] if $1-r \equiv 0~~\mbox{and}~~s\equiv k~~~~~(mod~n).$
 \item[ii)] if $r+2s-1 \not\equiv 0~~\mbox{and}~~1-r \not\equiv 0~~~~~(mod~n).$
 \end{description}
 \end{description}
\end{enumerate}
\end{cor}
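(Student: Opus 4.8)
The plan is to read $Q$ off from Theorem \ref{f4} and, in each case, decide whether its exponent set is a subgroup of $\mathbb{Z}_n$; since every subgroup of the cyclic group $\langle a\rangle$ is cyclic, $Q$ is cyclic precisely when this exponent set is closed under addition. I would write $Q_1=\langle r+2s-1\rangle$ for the first piece, which is always a subgroup, and $Q_2=\{\,l(1-r)-s : l\text{ even}\,\}=-s+\langle 2(1-r)\rangle$ for the second, which is a coset of the subgroup $\langle 2(1-r)\rangle$. First I would record the relevant parities: because $\gcd(r,m)=1$ with $m=4k$ even, $r$ is odd, so both $r+2s-1$ and $1-r$ are even; hence $Q_1\subseteq 2\mathbb{Z}_n$ and every element of $Q_2$ is congruent to $-s$ modulo $2$. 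Finally, since $k$ is odd, the subgroup $E=2\mathbb{Z}_n$ of even residues has odd order $k$ and is isomorphic to $\mathbb{Z}_k$, on which $2$ is invertible; this is the fact that drives the argument.

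For part (1), with $s=2w$ even, both $Q_1$ and $Q_2$ lie in $E$. I would transport the picture to $\mathbb{Z}_k$ through the halving isomorphism $E\cong\mathbb{Z}_k$, under which $Q_1$ becomes $\langle \tfrac{r-1}{2}+s\rangle$ and, using that $2$ is a unit modulo $k$ to rewrite $\langle 2(1-r)\rangle$ as $\langle \tfrac{r-1}{2}\rangle$, $Q_2$ becomes the coset $-w+\langle \tfrac{r-1}{2}\rangle$. The aim is then to check that these two sets share a common generating subgroup, so that their union equals that subgroup and $Q$ is cyclic; invertibility of $2$ modulo $k$ is exactly what should force the coset representative back into $Q_1$.

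For part (2), with $s$ odd, $Q_1\subseteq E$ consists of even residues while $Q_2$ consists of odd residues, so the two pieces are disjoint and $|Q|=|Q_1|+|Q_2|$. A subgroup of $\mathbb{Z}_n$ meeting both parity classes has its even part of index $2$, so $Q$ can be a subgroup only if $|Q_1|=|Q_2|$, that is $\gcd(r+2s-1,n)=\gcd(2(1-r),n)$, together with closure. I would then split off the degenerate cases. If $r+2s-1\equiv 0$ then $Q_1=\{0\}$, while the odd coset $Q_2$ is large; and if $1-r\equiv 0 \pmod n$ then $\langle 2(1-r)\rangle=\{0\}$, so $Q_2=\{-s\}$ is a single odd residue while $Q_1$ is large. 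In each such situation the even/odd size balance fails and the sum of two coset elements lands outside $Q$, giving the non-cyclic conclusion of (2a). The one coincidence where both pieces collapse at once is $1-r\equiv 0$ together with $s\equiv k \pmod n$ (whence $r+2s-1\equiv 2k\equiv 0$): then $Q_1=\{0\}$ and $Q_2=\{-s\}=\{k\}$, so $Q=\{0,k\}=\langle k\rangle$ is cyclic of order $2$, which is (2b)(i). When neither piece degenerates, (2b)(ii), the goal is to show that $\langle r+2s-1\rangle$ and $\langle 2(1-r)\rangle$ coincide and that $Q_2$ is the nontrivial parity-coset of this common subgroup, so that $Q$ is the full preimage of that subgroup under $\mathbb{Z}_n\to\mathbb{Z}_2$ and is again cyclic.

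The main obstacle is precisely the closure verification in the two cyclic cases (1) and (2b)(ii): one must show that the subgroups $\langle r+2s-1\rangle$ and $\langle 2(1-r)\rangle$ are compatible — equal, or one suitably contained in the other — so that the union of the subgroup $Q_1$ with the coset $Q_2$ is closed under addition. This reduces to a careful comparison of $\gcd(r+2s-1,n)$ with $\gcd(2(1-r),n)$, and it is here that the hypothesis that $k$ is odd, equivalently that $2$ is invertible modulo $k$, is indispensable; the parity bookkeeping presents $\mathbb{Z}_n$ as an extension of $\mathbb{Z}_2$ by the odd-order group $E\cong\mathbb{Z}_k$ and is what makes the four-way case division transparent.
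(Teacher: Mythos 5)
Your reading of Theorem \ref{f4} is the right starting point, and it is the only thing available to compare against: the paper offers no proof of Corollary \ref{f8} (Corollaries \ref{f5}--\ref{f10} are stated bare, followed only by numerical examples). Your decomposition $Q_1=\langle r+2s-1\rangle$, $Q_2=-s+\langle 2(1-r)\rangle$ and the parity bookkeeping are correct as far as they go. The fatal problem is the step you explicitly postpone as ``the main obstacle'': the claim that for $k$ odd and $s$ even the subgroups $\langle r+2s-1\rangle$ and $\langle 2(1-r)\rangle$ of $\mathbb{Z}_n$ must be comparable. That claim is false, and with it case (1) of the corollary itself. Invertibility of $2$ modulo $k$ identifies $\langle 2(1-r)\rangle$ with $\langle 1-r\rangle$, but it imposes no relation between $\gcd(r+2s-1,n)$ and $\gcd(1-r,n)$, because $s$ is a free parameter. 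Concretely, take $m=60$ (so $G=QD_{120}$, $n=30$, $k=15$ odd) and $\theta=13x+14$, a legitimate finite-order automorphism with $r=13\in U_{60}$ and $s=14$ even. Then $r+2s-1\equiv 10\pmod{30}$, so $Q_1=\{0,10,20\}$ has order $3$, while $2(1-r)\equiv 6\pmod{30}$, so $Q_2=-14+\{0,6,12,18,24\}=\{4,10,16,22,28\}$ is a coset of a subgroup of order $5$. The union has $7$ elements and $7\nmid 30$; directly, $a^{4}\in Q$ but $a^{8}\notin Q$. Thus $Q$ is not a subgroup of $\langle a\rangle$, hence not cyclic, even though $s$ is even and $k$ is odd. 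The same $r$ with $s=1$ refutes case (2)(b)(ii): there $r+2s-1\equiv 14\not\equiv 0$ and $1-r\equiv 18\not\equiv 0\pmod{30}$, yet $Q_1=\langle 2\rangle$ (all fifteen even exponents) and $Q_2=-1+\langle 6\rangle=\{5,11,17,23,29\}$ give a $20$-element union, again not a subgroup.

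So this is not a gap you can close by a finer gcd comparison: parts (1) and (2)(b)(ii) are simply not theorems as stated, which is presumably why the paper contains no argument for them. What your framework does prove, cleanly, is the sharp statement that for $s$ even the set $Q_1\cup Q_2$ is a subgroup if and only if one of $\langle r+2s-1\rangle$, $\langle 2(1-r)\rangle$ contains the other: when it does, the fact that doubling is a bijection on the odd-order group $E\cong\mathbb{Z}_k$ forces the representative $-s$ into the larger subgroup, and when it does not, a coset-counting argument shows closure fails. Comparability is automatic whenever the subgroups of $\mathbb{Z}_k$ form a chain, i.e.\ when $k$ is a prime power; this covers every $k$-odd example in the paper (all have $k=5$) and explains why the authors believed the statement, while $k=15$ is the smallest odd $k$ where it breaks. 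Only your treatment of cases (2)(a) and (2)(b)(i), which relies on the degenerate collapses $\langle r+2s-1\rangle=\{0\}$ or $\langle 2(1-r)\rangle=\{0\}$ together with the parity obstruction, survives; to salvage the rest one must add the comparability of $\langle r+2s-1\rangle$ and $\langle 2(1-r)\rangle$ as an explicit hypothesis, in the spirit of the case distinctions already present in Corollary \ref{f7}.
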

\begin{expl}
Let $G=QD_{40}$ where $m=20$, $n=10$, $k=5$. Since $\gcd(r,m)=1$ $~\Rightarrow r\in U_{20}=\{ 1,3,7,9,11,13,17,19\}$ and $s \in \mathbb{Z}_{10}=\{ 0,1,2,\ldots ,9\}.$
\begin{enumerate}
\item For $\theta_{1}=7x+6\in Aut(QD_{40})$ then $r=7$, $s=6$ $ \Rightarrow r+2s-1=8$
then $Q_{1}=\{1,a^{2},a^{4},a^{6},a^{8}\}$ is cyclic.
\item \begin{description}
\item[a)]Take $\theta_{2}=x+7\in Aut(QD_{40})$ then $r=1$, $s=7$ $\Rightarrow r+2s-1=4~and~~1-r=0$ then $Q_{2}=\{1,a^{2},a^{3},a^{4},a^{6},a^{8}\}$ is non-cyclic.
\item[b)]
\begin{description}
\item[i)] Consider $\theta_{3}=11x+5\in Aut(QD_{40})$ then $r=11$, $s=5$
$ \Rightarrow r+2s-1=0,~1-r=0,~~s=5\equiv k$ then
$Q_{3}=\{1,a^{5}\}$ is cyclic.
\item[ii)] For $\theta_{4}=7x+5\in Aut(QD_{40})$ then $r=7$, $s=5$
$ \Rightarrow r+2s-1=6,~~1-r=-6 \not\equiv 0$ then $Q_{4}=\{1,a,a^{2},\ldots,a^{9}\}$ is cyclic.
\end{description}
\end{description}
\end{enumerate}
\end{expl}

\begin{cor} \label{f9}
Let $\theta=rx+s\in Aut_{v}(QD_{2m})$ be of finite order, where $m=4k,~n=2k$ and $k$ is even number. Then $Q$ is isomorphic to a cyclic group as follows:
\begin{enumerate}
\item If$~~~~~~\mid Q \mid=2~~~$ then
$~~~~~~~~~~~~~Q= \langle a^{k}~:~a^{n}=1 \rangle$  $~~\mbox{and}~~~~~~Q\cong \langle k \rangle.$
\item If $~~\mid Q \mid=\frac{k}{2}~$ then
\begin{description}
\item[a)] Let $\gcd(r+2s-1,~n)=d_{1}$. If $n\nmid r+2s-1$ then $Q= \langle a^{d_{1}}~:~a^{n}=1 \rangle$ and$~~~~~~Q\cong \langle d_{1} \rangle.$
\item[b)] If $n\mid r+2s-1 ~~\mbox{and}~~\gcd(r+2s-1,~n)=n$
$~~~~~~~~~~\mbox{then}~~~Q= \langle a^{4}~:~a^{n}=1 \rangle$ and$~~~~~~Q\cong \langle 4 \rangle.$
\end{description}
\item If$~~~~~~\mid Q \mid=k~~~\mbox{and}~~~~\gcd(r+2s-1,~n)=d_{1}$ then
\begin{description}
\item[a)] if $(r+2s-1)\cap (1-r)j^{'}-s=\emptyset $
$~~~~~~~~~~\mbox{then}~~~Q= \langle a^{d_{1}/2}~:~a^{n}=1 \rangle$ and$~~~~~~Q\cong \langle \frac{d_{1}}{2} \rangle.$
\item[b)] if $(r+2s-1)\cap (1-r)j^{'}-s\neq \emptyset $
$~~~~~~~~~~\mbox{then}~~~Q= \langle a^{d_{1}}~:~a^{n}=1 \rangle$ and$~~~~~~Q\cong \langle d_{1} \rangle.$
\end{description}
\end{enumerate}
\end{cor}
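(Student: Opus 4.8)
The plan is to read off everything from the description of $Q$ in Theorem~\ref{f4}. Identifying $a^{j}$ with its exponent $j \bmod n$, that theorem presents $Q$ as the subset
$$Q \longleftrightarrow \langle r+2s-1\rangle \cup \{\, l(1-r)-s : l \text{ even}\,\} \subseteq \mathbb{Z}_n .$$
Writing $d_{1}=\gcd(r+2s-1,n)$, the first piece is the cyclic subgroup $\langle d_{1}\rangle = d_{1}\mathbb{Z}_n$ of order $n/d_{1}$, while (putting $l=2\ell$) the second piece is the coset $-s+\langle e\rangle$ with $e=\gcd(2(1-r),n)$. So in every case I would work inside the cyclic group $\mathbb{Z}_n$, $n=2k$, and the real question is when the union of a subgroup and a coset is again a (necessarily cyclic) subgroup, and which one.

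First I would record the organizing principle. By Corollary~\ref{f7}, in each of the listed regimes $Q$ is cyclic, i.e.\ a genuine subgroup of $\mathbb{Z}_n$. Since $\mathbb{Z}_n$ is cyclic it has a unique subgroup of each order dividing $n$, namely $\langle g\rangle$ with $g=n/|Q|$; equivalently $g$ is the $\gcd$ of $n$ together with all exponents occurring in $Q$. Thus, once $|Q|$ is fixed as in the hypotheses, the subgroup $Q$ is already determined as $\langle a^{\,n/|Q|}\rangle \cong \langle n/|Q|\rangle$, and the whole task reduces to checking that the generator named in each clause coincides with $a^{\,n/|Q|}$ under the stated side conditions.

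Then I would dispatch the cases in order. For (1), $n/|Q|=n/2=k$, so $Q=\langle a^{k}\rangle$ is the unique order-$2$ subgroup, immediate. For (2), $n/|Q|=4$: in (a) the hypothesis $n\nmid r+2s-1$ keeps $\langle d_{1}\rangle$ nontrivial, and I would show the coset $-s+\langle e\rangle$ contributes nothing new, so $Q=\langle d_{1}\rangle$ and hence $d_{1}=4$; in (b) the hypothesis $\gcd(r+2s-1,n)=n$ collapses the first piece to $\{0\}$, so $Q$ is carried entirely by $\{2\ell(1-r)-s\}\cup\{0\}$, which I would identify with $\langle 4\rangle$ using $r\equiv 1-2s \pmod n$ and the evenness of $k$. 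For (3), $n/|Q|=2$: the dichotomy is exactly whether $\langle d_{1}\rangle$ and the coset $-s+\langle e\rangle$ meet. If they are disjoint (a), the union doubles $\langle d_{1}\rangle$ to the next larger cyclic subgroup $\langle d_{1}/2\rangle$, forcing $d_{1}=4$ and $Q=\langle a^{2}\rangle$; if they meet (b), the whole coset lies in $\langle d_{1}\rangle$, so $Q=\langle d_{1}\rangle$ and $d_{1}=2$.

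The main obstacle is the closure/overlap analysis feeding clauses (2b), (3a) and (3b): one must verify that the translate $-s$ and the step $e=\gcd(2(1-r),n)$ interact with $\langle d_{1}\rangle$ so that the displayed union is literally closed under addition, and that a single point of intersection already forces containment of the entire coset. This is precisely where the standing parity hypothesis that $k$ is even enters, together with the relation $r+2s-1\equiv 0 \pmod n$ in case (2b); the remaining identifications of generators are then routine $\gcd$ bookkeeping, consistent with $g=n/|Q|$ throughout.
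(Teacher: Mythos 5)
The paper never actually proves Corollary \ref{f9}: it is stated, with no argument and no accompanying example, as one of several consequences of the description of $Q$ in Theorem \ref{f4} and the cyclicity criteria of Corollary \ref{f7}. So there is no paper proof to measure you against, and your outline, if completed, would already be more than the paper supplies. Your organizing principle is also the right one: once Corollary \ref{f7} guarantees that $Q$ is a subgroup of $\langle a\rangle\cong\mathbb{Z}_{n}$, uniqueness of the subgroup of each given order in a cyclic group yields $Q=\langle a^{\,n/|Q|}\rangle$, which settles case (1) outright and converts the remaining clauses into arithmetic claims: (2a) and (3a) both amount to showing $d_{1}=4$, (3b) to showing $d_{1}=2$, and (2b) to identifying $\{0\}\cup\bigl(-s+\langle\gcd(2(1-r),n)\rangle\bigr)$ with $\langle 4\rangle$.

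The gap is that your proposal stops exactly where the content of the corollary begins. Every one of those reductions is discharged only by an ``I would show'': that the coset piece contributes nothing new in (2a), that the congruence $r+2s-1\equiv 0$ and evenness of $k$ force the identification in (2b), and that disjointness versus intersection of the two pieces yields $\langle d_{1}/2\rangle$ versus $\langle d_{1}\rangle$ in (3). None of these follow from the unique-subgroup principle alone. For instance, in (3b) a single common point of $\langle d_{1}\rangle$ and the coset does not by itself force the whole coset into $\langle d_{1}\rangle$: in $\mathbb{Z}_{12}$ the subgroup $\{0,4,8\}$ and the coset $2+\langle 6\rangle=\{2,8\}$ meet without containment. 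Ruling such configurations out requires using that $Q$ is a group --- differences of coset elements lie in $Q$, so $\langle e\rangle\subseteq Q$ where $e=\gcd(2(1-r),n)$ --- and then pinning $\langle e\rangle$ against $\langle d_{1}\rangle$. Similarly, in (3a) the step ``the union doubles $\langle d_{1}\rangle$ to $\langle d_{1}/2\rangle$'' needs the observation that $Q\setminus\langle d_{1}\rangle$ is simultaneously a union of $\langle d_{1}\rangle$-cosets and a single $\langle e\rangle$-coset, whence $\langle e\rangle=\langle d_{1}\rangle$, $[Q:\langle d_{1}\rangle]=2$, and only then $d_{1}=4$. These verifications are finite and doable, but they \emph{are} the theorem; until they are written out, cases (2) and (3) remain unproved.
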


\begin{cor} \label{f10}
Let $\theta=rx+s\in Aut_{v}(QD_{2m})$ be of finite order, if $k$ is odd number. Then the conditions for $Q$ to be cyclic and isomorphism relationship are:
\begin{enumerate}
  \item If $s$ is even, then
\begin{description}
  \item[a)] if $~r=1,~s=0$ $~\mid Q \mid=1$ then $Q=\{1\}$ $~~\mbox{and}~~Q\cong \langle 0 \rangle.$
  \item[b)] if $~\mid Q \mid=k$ then $~~Q= \langle a^{2}~:~a^{n}=1 \rangle\cong \langle 2 \rangle.$
\end{description}
  \item If $s$ is odd, then\begin{description}
  \item[a)] Let $n\mid 1-r, \  n\mid s-k$.
If $~~\mid Q \mid=2$ then $~Q= \langle a^{k}~:~a^{n}=1 \rangle\cong \langle k \rangle.$
\item[b)] if $r+2s-1 \not\equiv 0~~\mbox{and}~~1-r \not\equiv 0~~~~~(mod~n)$
and$~~\mid Q \mid=n$ then $~Q= \langle a~~:~a^{n}=1 \rangle\cong \langle 1 \rangle.$
\end{description}
\end{enumerate}
\end{cor}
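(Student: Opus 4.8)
The plan is to read the generalized symmetric space directly off Theorem \ref{f4}, which already presents $Q$ as a set of exponents in $\mathbb{Z}_n$: it is the union of the subgroup $\langle r+2s-1\rangle$ (contributed by the elements $a^i$) together with the set $\{\,l(1-r)-s \pmod n : l \text{ even}\,\}$ (contributed by the elements $a^l b$). Writing $d_1=\gcd(r+2s-1,n)$, the first piece is literally the cyclic subgroup $\langle d_1\rangle\le\mathbb{Z}_n$, while the second piece is the coset $-s+\langle 2(1-r)\rangle$ of $\mathbb{Z}_n$, since as $l$ runs over the even residues $l(1-r)$ runs over $\langle 2(1-r)\rangle$. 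The whole argument reduces to computing these two pieces under the stated hypotheses and recognizing their union as one explicit cyclic subgroup of $\mathbb{Z}_n$, matching the notation $Q\cong\langle d\rangle$.

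First I would record the two parity facts that drive every case. Since $\gcd(r,m)=1$ with $m=4k$ even, $r$ is odd, so $r-1$ and $1-r$ are even; hence $r+2s-1=(r-1)+2s$ is even for every $s$, and the first piece $\langle r+2s-1\rangle$ always lies inside the even residues $\langle 2\rangle\le\mathbb{Z}_n$. The second piece $l(1-r)-s$ has the parity of $-s$ (because $l(1-r)$ is even), so it consists of even residues when $s$ is even and of odd residues when $s$ is odd. This is exactly the split between items (1) and (2): for $s$ even both pieces sit inside $\langle 2\rangle$, whereas for $s$ odd they occupy the even and the odd residues respectively. Here $k$ odd enters only through the arithmetic of $n=2k$ and the consistency of these assignments (for instance $s\equiv k\pmod n$ with $k$ odd indeed forces $s$ odd, placing case (2a) in the correct branch).

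Then I would dispatch the four cases by specialization. For (1a), substituting $r=1,\,s=0$ collapses both pieces to $\{0\}$, so $Q=\{1\}\cong\langle 0\rangle$. For (1b), the paragraph above gives $Q\subseteq\langle 2\rangle$, and since $\langle 2\rangle\le\mathbb{Z}_n$ has order $n/\gcd(2,n)=k$, the hypothesis $|Q|=k$ forces $Q=\langle 2\rangle$, i.e. $Q=\langle a^2:a^n=1\rangle\cong\langle 2\rangle$, by containment plus counting. For (2a), using $n\mid 1-r$ and $n\mid s-k$ gives $r+2s-1\equiv 2k\equiv 0$, so the first piece is $\{0\}$, while $\langle 2(1-r)\rangle=\{0\}$ makes the second piece $\{-s\}=\{-k\}=\{k\}$; thus $Q=\{1,a^k\}=\langle a^k\rangle\cong\langle k\rangle$ of order $2$. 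For (2b), every element of $Q$ is a power of $a$, so $Q\subseteq\langle a\rangle\cong\mathbb{Z}_n$, and the hypothesis $|Q|=n$ again forces equality, giving $Q=\langle a:a^n=1\rangle\cong\langle 1\rangle$.

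The step I would be most careful about is the passage from the set-theoretic union in Theorem \ref{f4} to the claim that $Q$ is a genuine cyclic subgroup with the named generator. In (1a) and (2a) this is settled by the explicit computation, where the union is plainly a subgroup. In (1b) and (2b) the honest content is the parity-based containments $Q\subseteq\langle 2\rangle$ and $Q\subseteq\langle 1\rangle$ together with the supplied order hypotheses $|Q|=k$ and $|Q|=n$: the equalities then follow purely by cardinality, so no independent verification that $Q$ is closed under addition is needed. The one place I would double-check is that the coset piece $-s+\langle 2(1-r)\rangle$ cannot escape the claimed group in each case, since that is precisely where the union could otherwise fail to fill out, or to be contained in, the asserted cyclic subgroup.
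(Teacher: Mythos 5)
Your proposal is correct and follows the route the paper itself intends: Corollary \ref{f10} is stated in the paper without any proof, as one of the consequences to be ``read off'' from the description of $Q$ in Theorem \ref{f4} (illustrated only by the $QD_{40}$ example), and your derivation---decomposing the exponent set as $\langle r+2s-1\rangle \cup \bigl(-s+\langle 2(1-r)\rangle\bigr)$ in $\mathbb{Z}_{n}$, using the parity of $r$ and $s$ to place each piece, computing cases (1a) and (2a) directly, and finishing (1b) and (2b) by the containment-plus-counting argument---is exactly that specialization, carried out within the paper's own convention that exponents of elements of $Q$ are taken modulo $n$ (i.e.\ $a^{n}=1$ in the presentations $\langle a^{2}:a^{n}=1\rangle$, $\langle a^{k}:a^{n}=1\rangle$, etc.). There is no gap; if anything, your explicit handling of the coset piece $-s+\langle 2(1-r)\rangle$ is more careful than anything the paper records for this statement.
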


\begin{cor}
Let $\theta=rx+s\in Aut(QD_{2m})$ be a fixed automorphism. Then for any $H$ and $Q,$ $HQ\neq QD_{2m}.$
\end{cor}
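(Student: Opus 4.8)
The plan is to prove the stronger statement that a single explicit element is always omitted from $HQ$; the natural choice is the reflection $ab=a^{1}b$. Once $ab\notin HQ$ is established it follows immediately that $HQ\subsetneq QD_{2m}$, since $HQ\subseteq QD_{2m}$ is trivial.

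The starting observation, read off from Theorem \ref{f4}, is that $Q$ consists entirely of powers of $a$; that is, $Q\subseteq\langle a\rangle$, with no element of $Q$ carrying a factor of $b$. Writing $H=H_{a}\cup H_{b}$ with $H_{a}=H\cap\langle a\rangle$ and $H_{b}$ the reflection part of $H$, I would split $HQ=H_{a}Q\cup H_{b}Q$. Because $Q\subseteq\langle a\rangle$, the set $H_{a}Q$ lies in $\langle a\rangle$ and hence contains no reflection, so the only way to produce $ab$ is through $H_{b}Q$. Using Lemma \ref{f1}, a typical element of $H_{b}Q$ is $(a^{j'}b)a^{j''}=a^{j'+j''(n-1)}b$, where $a^{j'}b\in H_{b}$ and $a^{j''}\in Q$. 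Thus the task reduces to showing that $j'+j''(n-1)\equiv 1\ (\mathrm{mod}\ m)$ has no solution with $a^{j'}b\in H$ and $a^{j''}\in Q$.

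The heart of the argument is a parity count, made possible because $r$ is odd (as $\gcd(r,m)=1$ and $m=4k$ is even), so that both $r+2s-1$ and $1-r$ are even. By Theorem \ref{f4} the reflection exponents $j'$ are even and $n-1$ is odd, so $j'+j''(n-1)\equiv j''\ (\mathrm{mod}\ 2)$; hence $ab$ could only arise if some $a^{j''}\in Q$ had $j''$ odd. Again by Theorem \ref{f4}, the exponents occurring in $\langle r+2s-1\rangle$ are all even, while those in the coset $l(1-r)-s$ have the parity of $s$, so $Q$ contains an odd exponent only when $s$ is odd. But when $s$ is odd the defining condition for a reflection in $H$, namely $j'(r+2s-1)\equiv -s\ (\mathrm{mod}\ n)$ with $j'$ even, is unsolvable: its left-hand side is even while $-s$ is odd and $n$ is even. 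Hence $H_{b}=\varnothing$ in that case and $HQ\subseteq\langle a\rangle$ carries no reflection at all. Combining the two cases, $ab\notin HQ$ always, so $HQ\neq QD_{2m}$.

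The step I expect to be the main obstacle is pinning down the parities uniformly: one must be careful that the exponents $j',j''$ live modulo $m$ while the defining congruences are stated modulo $n=m/2$, and verify that reduction modulo $n$ preserves parity, which it does precisely because $n$ is even. A secondary point worth checking is the near-trivial automorphism: I would confirm that the characterization in Theorem \ref{f4} still forces the reflection exponents of $H$ to be even, so that the parity obstruction applies uniformly and no degenerate $\theta$ escapes the argument.
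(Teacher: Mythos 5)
Your proposal is correct as a derivation from Theorem \ref{f4}, and it actually fills a hole: the paper states this corollary with no proof at all --- it is introduced only by the remark that further results follow ``using the descriptions of $H$ and $Q$,'' and the only support offered is Example \ref{f21}, which checks two particular automorphisms of $QD_{48}$. So there is no argument in the paper to compare yours against; yours is the missing one, and it is the natural completion of the paper's implicit reasoning. Your key steps all check out against Theorem \ref{f4}: $Q\subseteq\langle a\rangle$, so a reflection in $HQ$ can only arise as $(a^{j'}b)a^{j''}=a^{j'+j''(n-1)}b$ with $a^{j'}b\in H$ and $a^{j''}\in Q$; since $\gcd(r,m)=1$ and $m=4k$ is even, $r$ is odd, so $r+2s-1$ and $1-r$ are both even; hence $\langle r+2s-1\rangle\le\mathbb{Z}_{n}$ consists of even residues while the coset exponents $l(1-r)-s$ have the parity of $s$; and the dichotomy on the parity of $s$ (odd $s$ makes $j'(r+2s-1)\equiv-s\ (\mathrm{mod}\ n)$ unsolvable because its left side is even and $n$ is even, so $H$ contains no reflections at all; even $s$ forces every reflection of $HQ$ to carry an even $a$-exponent) excludes $ab$ in both cases. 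Your insistence on checking that parity survives the passage from congruences mod $n$ to exponents mod $m=2n$ is exactly the right scruple, and it holds precisely because $n=2k$ is even.

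The one substantive caveat is the ``degenerate $\theta$'' issue you flagged at the end, and it cannot in fact be discharged: your even-$s$ case leans on the clause in Theorem \ref{f4} that the reflection exponents $j'$ of $H$ are even, and that clause is not sound for an arbitrary automorphism. For $\theta=\mathrm{id}$ (i.e.\ $r=1$, $s=0$, which the corollary's hypotheses do not exclude and which the paper itself entertains in Corollary \ref{f10}), the true fixed-point set is all of $G$ --- including the reflections $a^{j}b$ with $j$ odd --- while $Q=\{1\}$, so $HQ=QD_{2m}$ and the corollary is literally false in that case. In other words, your proof is a correct deduction from Theorem \ref{f4} as stated, but the statement being deduced inherits the defect of that theorem's even-exponent restriction on $H$; an honest version of the corollary must exclude the identity (and any $\theta$ fixing a reflection $a^{j}b$ with $j$ odd), or Theorem \ref{f4} must be corrected first. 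That is a flaw in the paper's framework, not in your reduction from it.
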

\begin{exple}\label{f21}
Consider the involution $\theta_{1}=17x+8$ of $QD_{48}$. Then $H_{1}=\{1,a^{3},a^{6},a^{9},a^{2}b,a^{8}b\}~~~~$ and $~~~Q_{1}=\{1,a^{4},a^{8}\}$
$\Rightarrow H_{1}Q_{1}\neq QD_{48}.$
Now, consider the automorphism $\theta_{2}=5x+3$ of order $4.$ Then $H_{2}=\{1,a^{6}b\}~~~~$ and $~~~Q_{2}=\{1,a^{2},a^{4},\ldots ,a^{10},a,a^{5},a^{9}\}$ $\Rightarrow H_{2}Q_{2}\neq QD_{48}.$
\end{exple}
\begin{prop}\label{f20}
Let $\theta=rx+s\in Aut_{v}(QD_{2m})$ and for any $a^{l}\in Q.$
\begin{description}
  \item[i)] If $l$ is odd, then
$H \backslash Q=\{\{a^{j}\}~\mid j\in \langle r+2s-1\rangle\cup [(1-r)j^{'}-s]~(mod~n)\}$
\item[ii)] If $l$ is even then
$H \backslash Q=\{\{a^{j},~a^{-j}\}~\mid j\in \langle r+2s-1\rangle\cup [(1-r)j^{'}-s]~(mod~n)\}$
\end{description}
where, $~j^{'}$ is any even number. In either case $G \backslash Q=\{Q\}$, that is there is only a single $G$-orbit on $Q.$
\end{prop}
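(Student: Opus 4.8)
The plan is to realize the $H$-action on $Q$ as the restriction of the $\theta$-twisted conjugation action $g * q = g\,q\,\theta(g)^{-1}$ of $G$ on $Q$, and then to read the orbits directly off the explicit descriptions in Theorem~\ref{f4}. First I would check that this action is well defined on $Q$: if $q = x\theta(x)^{-1}$ then $g * q = (gx)\,\theta(gx)^{-1} \in Q$. The decisive simplification is that for $h \in H$ one has $\theta(h) = h$, so $h * q = h\,q\,h^{-1}$ is ordinary conjugation; hence the $H$-orbits on $Q$ are exactly the $H$-conjugacy classes lying inside $Q$, and the single $G$-orbit claim becomes a transitivity statement for twisted conjugation.

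Next I would use that $Q \subseteq \langle a \rangle$ by Theorem~\ref{f4}, together with the two families of elements of $H$ from that theorem: the rotations $a^i$ and the reflections $a^{j'}b$ with $j'$ even. Conjugation by a rotation fixes every power of $a$ and so contributes nothing to the classes. Conjugation by a reflection is governed by the relation $b a^l b^{-1} = a^{l(n-1)}$ of Lemma~\ref{f1}(1), which gives $(a^{j'}b)\,a^l\,(a^{j'}b)^{-1} = a^{l(n-1)}$ for every reflection $a^{j'}b \in H$. Since $n-1$ is odd this map preserves the parity of the exponent, and reducing $l(n-1) \pmod m$ yields $a^{l(n-1)} = a^{-l}$ when $l$ is even and $a^{l(n-1)} = a^{n-l}$ when $l$ is odd.

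I would then argue by the parity of $l$. When $l$ is even, a reflection in $H$ carries $a^l$ to $a^{-l}$; since the action preserves $Q$ both elements lie in $Q$, and since rotations act trivially the entire $H$-orbit is $\{a^l, a^{-l}\}$, which is case (ii). When $l$ is odd I would aim to show the orbit is the singleton $\{a^l\}$, and this is the step I expect to be the \emph{main obstacle}, because a reflection a priori sends $a^l$ to $a^{n-l}$ rather than back to $a^l$. The resolution I would seek is a parity compatibility between the two descriptions in Theorem~\ref{f4}: one shows that the membership of an odd power $a^l$ in $Q$ is incompatible with the reflection condition $j'(r+2s-1) \equiv -s \pmod n$, $j'$ even, being met by a reflection that actually moves $a^l$, so that no nontrivial reflection acts and the class collapses to $\{a^l\}$, which is case (i). Pinning this incompatibility down, through the interplay of the moduli $n$ and $m$ and the parities of $r+2s-1$ and $s$, is where the bulk of the careful computation must be done.

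Finally, the single $G$-orbit statement I would obtain immediately from the definition of $Q$: the identity $1 = 1\cdot\theta(1)^{-1}$ lies in $Q$, and its orbit under twisted conjugation is $\{g * 1 : g \in G\} = \{g\,\theta(g)^{-1} : g \in G\} = Q$. Thus $G$ acts transitively on $Q$ and $G \backslash Q = \{Q\}$, as asserted. This step is routine and independent of the parity analysis, so all the real work sits in the $H$-orbit computation, and within that the odd-$l$ singleton claim is the part I expect to require the most care.
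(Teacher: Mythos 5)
Your strategy is the same as the paper's: because $\theta$ fixes $H$ pointwise, the restriction of twisted conjugation to $H$ is ordinary conjugation; $Q\subseteq\langle a\rangle$ by Theorem \ref{f4}; powers of $a$ in $H$ centralize $\langle a\rangle$, so only reflections $a^{j'}b\in H$ can move anything, and by Lemma \ref{f1} they send $a^{l}$ to $a^{l(n-1)}$, which equals $a^{-l}$ precisely when $l$ is even. Your proof of the single $G$-orbit claim --- the twisted orbit of $1$ is $\{g\theta(g)^{-1}\mid g\in G\}=Q$ by definition --- is the same argument the paper gives, only stated more cleanly (the paper verifies it separately for $g=a^{i}$ and $g=a^{i}b$).

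The genuine gap is case (i): you correctly observe that a reflection would send an odd power $a^{l}$ to $a^{n-l}\neq a^{l}$, and you only conjecture the ``parity incompatibility'' that would rule such reflections out of $H$; you never establish it. It does hold, and closes in three lines from Theorem \ref{f4}. Since $\gcd(r,m)=1$ and $m$ is even, $r$ is odd, so $r+2s-1$ is even; hence every exponent in $\langle r+2s-1\rangle$ is even, while exponents of the second family satisfy $j\equiv l(1-r)-s\equiv -s\pmod 2$ (both $l$ and $1-r$ being even). Thus $Q$ contains an odd power of $a$ only when $s$ is odd. On the other hand, $a^{j'}b\in H$ requires $j'(r+2s-1)\equiv -s\pmod n$, whose left side is even; since $n$ is even this forces $s$ to be even. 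So whenever odd powers occur in $Q$, $H\subseteq\langle a\rangle$ and every $H$-orbit is a singleton, which is case (i). You should know that the paper's own proof has exactly the same hole: its simplification $a^{l(n-1)}=(a^{m})^{l/2}a^{-l}=a^{-l}$ silently assumes $l$ even, and the odd case is dismissed with an unjustified biconditional, so the parity lemma above is needed to make either argument complete. Note also that both your treatment of case (ii) and the paper's tacitly assume $H$ actually contains a reflection; when it does not (possible even for $s$ even, e.g.\ when $\gcd(r+2s-1,n)\nmid s$), even powers also give singleton orbits --- but that is a defect of the statement itself, not of your approach.
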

\begin{proof}
Since, $H$ is fixed by $\theta $
thus the action of $H$ on $Q$ is simply by conjugation. Notice that
$Q \subseteq \langle a \rangle \leq QD_{2m}$
so, the action of $H$ on general element $a^{l}$ of $Q.$
Let $~a^{i}\in H$
then, $a^{i}.a^{l}.a^{-i}=a^{i+l-i}=a^{l}.$
So, $\langle a \rangle$ fixes $Q$ point wise.

Suppose $~a^{i^{'}}b\in H$, then
$(a^{i^{'}}b).a^{l}.(a^{i^{'}}b)^{-1}=a^{i^{'}}(ba^{l})b^{-1}a^{-i^{'}}=a^{i^{'}}(a^{l(n-1)}b)b^{-1}a^{-i^{'}}=a^{i^{'}+l(n-1)-i^{'}}$
$~~~~~~~~~~~~~~~~~~~~~=a^{l(n-1)}=a^{ln}.a^{-l}=(a^{2n})^{\frac{l}{2}}.a^{-l}=(a^{m})^{\frac{l}{2}}.a^{-l}=a^{-l}$
so, $a^{j^{'}}b$ takes $a^{l}~~ \mbox{to}~~a^{-l}$ $\Rightarrow \{a^{j},~a^{-j}\}$ is an orbit $\Leftrightarrow a^{j^{'}}b \in H$ which is true $\Leftrightarrow~~~l$  is even.

Now, for every element of $Q$ is in $G$-orbit of $1 \in Q.$
Since, every element of $Q$ are of the form $a^{l}\in Q.$ and $G$ has two type of elements $a^{i}$ and $a^{i}b$.
Now let us supposed that $~~~q_{1}=a^{l}=a^{i}.\theta(a^{i})^{-1}~~~~~~~~\mbox{for some}~a^{i}\in G$, thus
$~~~~~~~~~~~~~~~~q_{1}=a^{i}.\theta(a^{-i})=a^{i}.a^{-ri+[2-n]y}=a^{i-ri+[2-n]y}=a^{-i(r-1)+(2-n)y}=a^{v(r-1)+(2-n)y},~~~~~~~~~\mbox{where}~v=-i.$

Also,
$~~~~~~~~~~~~~~a^{l}=(a^{i}b).\theta(a^{i}b)^{-1}~~~~~~~~\mbox{for some}~a^{i}b\in G\Rightarrow$
            $~~~~~~~~~~~~~~a^{l}=(a^{i}b).\theta(a^{i}b)=(a^{i}b).(a^{ri+y}b)=a^{i}(ba^{ri+y})b=a^{i}.a^{(ri+y)(n-1)}b.b=a^{i+(ri+y)(n-1)}b^{2}$
Let us denote this element $~~~q_{2}=a^{l}=a^{-v+(y-vr)(n-1)},~~~~~~~~~~~\mbox{where}~~v=-i.$

            For $a^{-v}\in G,$ we have
            $a^{-v}.1.\theta(a^{-v})^{-1}=a^{-v}.\theta(a^{v})=a^{-v}.a^{rv+(2-n)y}=a^{v(r-1)+(2-n)y}=q_{1}.$
            Now for $~a^{-v}b \in G,$ we have
            $(a^{-v}b).1.\theta(a^{-v}b)^{-1}=(a^{-v}b).\theta(a^{-v}b)=(a^{-v}b).a^{-rv+y}b=a^{-v}(ba^{-rv+y})b$
            $~~~~~~~~~~~~~~~~~~~~~~~~=a^{-v}.a^{(-rv+y)(n-1)}b.b=a^{-v+(y-rv)(n-1)}=q_{2}.$

Therefore, for any $q\in Q$ there exist $g \in G$ such that $g.1.\theta(g)^{-1}=q.$
Hence $~~~~~~G \backslash Q=\{Q\}$, there is a single $G$-orbit on $Q.$
\end{proof}
\begin{exple}
Revisiting $\theta_{1},~\theta_{2}$ of Example \ref{f21} applying Proposition \ref{f20} to obtain that for $\theta_{1}$. Since all $l$ are even, therefore
$H_{1}\backslash Q_{1}=\{\{1\},~\{a^{4},a^{-4}\},\{a^{8},a^{-8}\}\}$,
            for $\theta_{2}$ there exists even and odd powers of elements of $Q$.
            For all even powers
            $H_{2}\backslash Q_{2}=\{\{1\},~\{a^{2},a^{-2}\},\ldots , \{a^{10},a^{-10}\},~\{a,a^{-1}\}, ~\{a^{5},a^{-5}\},~\{a^{9},a^{-9}\}\}$
            and odd powers
            $H_{2}\backslash Q_{2}=\{\{1\},~\{a^{2}\},\ldots, \{a^{10}\},~\{a\},~\{a^{5}\},~\{a^{9}\}\}.$
\end{exple}


\section{Involutions in  $Aut(QD_{2m})$}
If $\theta=rx+s$ is an automorphism such that $\theta^{2}=$id, then Proposition \ref{f2} gives $r\in R^{2}_{m}$ and $s\in ZD\mbox{iv}(r+1).$

\begin{prop}

Suppose that $m=4p^{t_{1}}_{1}p^{t_{2}}_{2},$ where the $p_{i}$ are distinct odd primes. Then $\mid R^{2}_{m} \mid =8$
\end{prop}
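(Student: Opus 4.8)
The plan is to compute $R^2_m$ directly from its definition, $R^2_m=\{r \pmod m \mid \gcd(r,m)=1 \text{ and } r^2\equiv 1 \pmod n\}$, using the Chinese Remainder Theorem and keeping careful track of the fact that the representative $r$ is taken modulo $m=4k$ while the defining congruence $r^2\equiv 1$ is imposed only modulo $n=2k$. Writing $P_i=p_i^{t_i}$, the hypothesis gives $m=4P_1P_2$ and $n=2P_1P_2$, so the two moduli agree away from $2$ and differ precisely in their $2$-part (namely $4$ against $2$). This discrepancy is exactly what makes the count equal $8$ rather than $4$, so it is the point I would foreground.

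First I would apply CRT to the modulus $m$: a unit $r$ modulo $m$ corresponds bijectively to a triple $(\alpha,\beta_1,\beta_2)$ with $\alpha\in(\mathbb Z/4)^{\times}$ and $\beta_i\in(\mathbb Z/P_i)^{\times}$. Next I would rewrite the condition $r^2\equiv 1 \pmod n$ by applying CRT to $n=2P_1P_2$, which splits it into the three independent congruences $r^2\equiv 1 \pmod 2$, $r^2\equiv 1 \pmod{P_1}$ and $r^2\equiv 1 \pmod{P_2}$. Then I would count solutions component by component. The congruence modulo $2$ holds automatically, since every unit modulo $m$ is odd; crucially it imposes no restriction on $\alpha$, which therefore ranges freely over the two units $\{1,3\}$ of $\mathbb Z/4$. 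For each odd prime power $P_i$ the group $(\mathbb Z/P_i)^{\times}$ is cyclic of even order $\phi(P_i)=p_i^{t_i-1}(p_i-1)$, so $x^2=1$ has exactly the two solutions $\pm 1$. By the independence of the CRT factors, the total is $2\cdot 2\cdot 2=8$.

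The one genuinely delicate step — and the one I would present most carefully — is the mismatch between the modulus of the representative and the modulus of the congruence. A naive count of the square roots of unity inside $(\mathbb Z/n)^{\times}$ gives only $4$; the correct value $8$ appears because the two distinct units $r$ and $r+n$ modulo $m$ reduce to the same class modulo $n$. Equivalently, $R^2_m$ is the full preimage of those four square roots under the reduction map $(\mathbb Z/m)^{\times}\to(\mathbb Z/n)^{\times}$, whose kernel $\{1,\,1+n\}$ has order $2$; this is the same free factor of $2$ contributed above by $\alpha\in(\mathbb Z/4)^{\times}$. The subsidiary facts I would verify explicitly are that this reduction is surjective (every prime dividing $m$ already divides $n$) and that both lifts $r$ and $r+n$ genuinely remain coprime to $m$, since it is exactly here that an erroneous count is most likely to creep in.
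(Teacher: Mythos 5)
Your proof is correct, and it takes a genuinely different route from the paper's. The paper argues elementarily: it writes $r=2q+1$, reduces $r^{2}\equiv 1\pmod{n}$ to $4q(q+1)\equiv 0\pmod{n}$, then splits into the cases ``$q$ even'' and ``$q+1$ even'' and divides through to conclude $q\equiv 0$ or $q+1\equiv 0 \pmod{p_{1}^{t_{1}}p_{2}^{t_{2}}}$, i.e.\ $r\equiv\pm 1\pmod{n}$. That division step is precisely where mixed factorizations are lost: one can have $p_{1}^{t_{1}}\mid q$ while $p_{2}^{t_{2}}\mid q+1$, producing square roots of unity with $r\equiv 1\pmod{p_{1}^{t_{1}}}$ and $r\equiv -1\pmod{p_{2}^{t_{2}}}$ that are not $\pm 1$ modulo $n$; moreover, the classes $r\equiv\pm 1\pmod{n}$ alone would account for only $4$ units modulo $m$, not the claimed $8$. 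Your CRT decomposition is exactly what repairs this: two choices $\pm 1$ in each $(\mathbb{Z}/p_{i}^{t_{i}})^{\times}$, a vacuous condition modulo $2$, and the free unit $\alpha\in(\mathbb{Z}/4)^{\times}$ --- equivalently, the order-$2$ kernel of the surjection $(\mathbb{Z}/m)^{\times}\rightarrow(\mathbb{Z}/n)^{\times}$ --- giving $2\cdot 2\cdot 2=8$. Your insistence on the mismatch between the modulus $m$ of the representative and the modulus $n$ of the congruence is also the right emphasis, since that lifting factor of $2$ is what the paper's computation never cleanly supplies. As a further payoff, your argument generalizes verbatim to the paper's subsequent proposition (the count $2^{q+1},\,2^{q+2},\,2^{q+3}$ for $m=2^{\alpha}p_{1}^{t_{1}}\cdots p_{q}^{t_{q}}$), whose proof the paper dismisses as ``similar''.
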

 \begin{proof}
  Since, $m=4p^{t_{1}}_{1}p^{t_{2}}_{2}~ \Rightarrow n=2p^{t_{1}}_{1}p^{t_{2}}_{2}.$ Suppose that $r\in \mathbb{Z}_n$ with $r^{2}\equiv1~(\mbox{mod~n})$ $~~~ \because \gcd(r,m)=1 \Rightarrow \gcd(r,4p^{t_{1}}_{1}p^{t_{2}}_{2})=1.$ But $m$ is even therefore, $r$ is odd. Assume that $r=2q+1$ for some $0\leq q < (n-1).$ Now, $r^{2}=(2q+1)^{2}=4q^{2}+4q+1$ $\Rightarrow 4q^{2}+4q\equiv 0~~(\mbox{mod~n})$ $\Rightarrow 4q(q+1)=l.(2p^{t_{1}}_{1}p^{t_{2}}_{2})$ $\because \gcd(4, 2p^{t_{1}}_{1}p^{t_{2}}_{2})=2$ $ \Rightarrow 2q(q+1)= l.(p^{t_{1}}_{1}p^{t_{2}}_{2})$ $(\mbox{mod}~p^{t_{1}}_{1}p^{t_{2}}_{2}).$ Either $q$ or $q+1$ is odd.
  \begin{enumerate}
  \item If $q$ is even, then $q=\frac{l}{2(q+1)}.p^{t_{1}}_{1}p^{t_{2}}_{2}$ $\Rightarrow q=h.p^{t_{1}}_{1}p^{t_{2}}_{2}$ where $1\leq h\leq4.$
Now, $r=2q+1~~\Rightarrow r=2h.p^{t_{1}}_{1}p^{t_{2}}_{2}+1$ $~~\Rightarrow r\equiv 1 (\mbox{mod~n})$
\item If $q+1$ is even, then $q+1=\frac{l}{2q}.p^{t_{1}}_{1}.p^{t_{2}}_{2}$ $\Rightarrow q+1=h.p^{t_{1}}_{1}p^{t_{2}}_{2}$ $\Rightarrow q=h.p^{t_{1}}_{1}p^{t_{2}}_{2}-1$ where $1\leq h\leq4.$
Now, $r=2q+1~~\Rightarrow r=2h.p^{t_{1}}_{1}p^{t_{2}}_{2}-1$ $~~\Rightarrow r\equiv -1 (\mbox{mod~n})$ $\Rightarrow r\equiv \pm 1 (\mbox{mod~n})$,
\end{enumerate}
\end{proof}

\begin{prop}

Suppose that $m=2^{\alpha}p^{t_{1}}_{1}p^{t_{2}}_{2} \ldots p^{t_{q}}_{q}$ where the $p_{i}$ are distinct odd primes. Then

$$\mid R^{2}_{m} \mid =\left\{
                         \begin{array}{ll}
                           2^{q+1} & \hbox{$\alpha= 2$} \\
                           2^{q+2} & \hbox{$\alpha= 3$} \\
                           2^{q+3} & \hbox{$\alpha \geq 4$.}
                         \end{array}
                       \right.
 $$
\end{prop}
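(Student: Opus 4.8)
The plan is to reduce the count to the classical enumeration of square roots of unity in a residue ring, and then to account for the passage from modulus $n$ to modulus $m=2n$. Write $n=\tfrac{m}{2}=2^{\alpha-1}p_1^{t_1}\cdots p_q^{t_q}$, and recall that $R^2_m=\{r\in U_m\mid r^2\equiv 1\ (\mathrm{mod}\ n)\}$, so that $r$ is read modulo $m$ (through the condition $\gcd(r,m)=1$ defining membership in $U_m$) while the congruence $r^2\equiv 1$ only constrains $r$ modulo $n$; this is exactly the point exploited by Proposition~\ref{f2} in the involution case. First I would set $S=\{x\in\mathbb{Z}_n^{*}\mid x^2\equiv 1\ (\mathrm{mod}\ n)\}$ and express $R^2_m$ as the preimage of $S$ under the reduction map $\pi\colon\mathbb{Z}_m^{*}\to\mathbb{Z}_n^{*}$, which is well defined since $n\mid m$ and since the condition $r^2\equiv1\pmod n$ depends only on $r\bmod n$.

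The key step is to show that $\pi$ is surjective and exactly two-to-one, so that $|R^2_m|=2\,|S|$. Each unit $\bar r\in\mathbb{Z}_n^{*}$ has precisely the two lifts $\bar r$ and $\bar r+n$ in $\{0,\dots,m-1\}$; both are coprime to $n$, and since $\alpha\ge 2$ forces $n$ to be even, any unit modulo $n$ is necessarily odd, whence $\bar r$ and $\bar r+n$ are both odd and therefore coprime to $m=2n$. This yields fibers of size exactly $2$ and hence the extra factor of $2$. This bookkeeping, namely ensuring that both lifts remain units modulo $m$, which is precisely where the evenness of $n$ is used, is the one place that must be handled with care.

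Finally I would compute $|S|$ by the Chinese Remainder Theorem applied to $n=2^{\alpha-1}\prod_i p_i^{t_i}$. Each odd prime power $p_i^{t_i}$ has cyclic unit group of even order, so $x^2\equiv1$ has exactly two solutions there, contributing a factor $2^{q}$ overall. For the $2$-part $2^{\alpha-1}$ I would invoke the standard fact that $x^2\equiv1\ (\mathrm{mod}\ 2^{e})$ has $1$, $2$, or $4$ solutions according as $e=1$, $e=2$, or $e\ge 3$; taking $e=\alpha-1$ gives $|S|=2^{q}$, $2^{q+1}$, $2^{q+2}$ for $\alpha=2,3,\ge 4$ respectively. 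Multiplying by the factor $2$ from the lifting step produces $|R^2_m|=2^{q+1},2^{q+2},2^{q+3}$ in the three cases, as claimed. The main obstacle is thus conceptual rather than computational: one must recognize that the stated exponents are exactly one larger than the classical square-root counts modulo $n$, the discrepancy being explained entirely by the two-to-one reduction $\mathbb{Z}_m^{*}\to\mathbb{Z}_n^{*}$.
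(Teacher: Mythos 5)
Your argument is correct, but it is not the route the paper takes. The paper in fact gives no self-contained proof of this proposition: it only says the proof is ``similar'' to that of the preceding proposition (the case $m=4p_{1}^{t_{1}}p_{2}^{t_{2}}$), and that earlier proof is an elementary parity computation --- write $r=2q+1$, expand $(2q+1)^{2}\equiv 1$, and conclude from $2q(q+1)\equiv 0$ that the odd part $p_{1}^{t_{1}}p_{2}^{t_{2}}$ of $n$ divides $q$ or $q+1$, whence $r\equiv\pm 1\pmod{n}$. That deduction is incomplete: the odd part of $n$ can split between $q$ and $q+1$ (for instance with $n=30$ the element $r=11$ satisfies $r^{2}\equiv 1\pmod{30}$ yet $r\not\equiv\pm 1\pmod{30}$), and these ``mixed'' square roots are precisely what produces the factor $2^{q}$ in the statement; read literally, the paper's argument would yield only $4$ elements rather than $2^{q+1}$ already in the $\alpha=2$ case. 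Your chinese-remainder decomposition handles this correctly: the two square roots of unity modulo each odd prime power give the factor $2^{q}$, the $1/2/4$ trichotomy for square roots modulo $2^{\alpha-1}$ gives the dependence on $\alpha$, and the two-to-one reduction $\mathbb{Z}_{m}^{*}\to\mathbb{Z}_{n}^{*}$ --- legitimate because $\alpha\geq 2$ makes $n$ even, so both lifts $\bar r$ and $\bar r+n$ of a unit mod $n$ are odd and hence units mod $2n$ --- supplies the remaining factor of $2$. So your proof is genuinely different from, and strictly more rigorous than, what the paper offers; it also explains structurally why each exponent in the statement is exactly one more than the classical count of square roots of unity modulo $n$.
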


 \begin{proof} Similar to the Propositions 10.1.
\end{proof}


\begin{expl}
Let $G=QD_{192}$, then $U_{96}=\{ 1,5,7,11,13,17,19,23,25,29,31,35,37,$ $41,43,47,49, 53,55,59,61,65,67,71,73,77,79,83,85,89,91,95\}$ and
$R_{96}^{2}=\{ 1,7,17,$ $23,25,31,41,47,49,55,65,71,73,79,89,95\}$ thus, $\mid R_{96}^{2}\mid=2^{4}=16$


\end{expl}
\begin{cor}
$$|Aut_{2}(QD_{2m})|=\sum_{r\in R^{2}_{m}}\gcd(r+1,n).$$
\end{cor}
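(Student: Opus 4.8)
The plan is to recognize this corollary as nothing more than the special case $v=2$ of Proposition \ref{f3}. First I would recall that Proposition \ref{f3} establishes, for every integer $v\ge 1$, the identity
$$|Aut_{v}(QD_{2m})|=\sum_{r\in R^{v}_{m}}\gcd(r^{v-1}+r^{v-2}+\cdots +r+1,n).$$
Setting $v=2$, the polynomial expression $r^{v-1}+r^{v-2}+\cdots +r+1$ collapses to $r^{1}+r^{0}=r+1$, while $R^{2}_{m}$ is by definition the set of square roots of unity in $\mathbb{Z}_{n}$. Substituting these into the displayed formula immediately yields $|Aut_{2}(QD_{2m})|=\sum_{r\in R^{2}_{m}}\gcd(r+1,n)$, which is exactly the claimed identity.

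As an independent check (and an alternative route that avoids invoking the general $v$), I would argue directly from the characterization of involutions. The remark opening this section notes that $\theta=rx+s$ satisfies $\theta^{2}=\mbox{id}$ if and only if $r\in R^{2}_{m}$ and $s\in ZD\mbox{iv}(r+1)$, which itself is just Proposition \ref{f2} at $v=2$. Thus counting the elements of $Aut_{2}(QD_{2m})$ amounts to counting the admissible pairs $(r,s)$. For each fixed $r\in R^{2}_{m}$, the Definition records that $|ZD\mbox{iv}(r+1)|=\gcd(r+1,n)$, so there are precisely $\gcd(r+1,n)$ allowable values of $s$. Summing this count over all $r\in R^{2}_{m}$ reproduces the formula.

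I do not anticipate any genuine obstacle here: the whole content of the statement is the substitution $v=2$ into an already-established general result, and the only point requiring even a line of verification is the collapse of the sum $r^{v-1}+\cdots+r+1$ to $r+1$, which is routine. The corollary is therefore best presented as a one-line specialization of Proposition \ref{f3}, with the direct count offered merely for completeness.
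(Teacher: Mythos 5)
Your proposal is correct and matches the paper's approach: the paper presents this corollary without a separate proof precisely because it is the $v=2$ specialization of Proposition \ref{f3}, which is exactly your main argument. Your secondary direct count (pairs $(r,s)$ with $s\in ZD\mbox{iv}(r+1)$, using $|ZD\mbox{iv}(r+1)|=\gcd(r+1,n)$) is simply the proof of Proposition \ref{f3} itself specialized to $v=2$, so it adds verification but not a genuinely different route.
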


\begin{thm}
Let $r\in R^{2}_{m},$ then the followings hold:
\begin{enumerate}
\item $\langle r-1 \rangle \leq ZD\mbox{iv}(r+1)$
\item $N_{r}=\mid \frac{ZD\mbox{iv}(r+1)}{\langle r-1 \rangle} \mid \leq 2$
\item $N_{r}=2$ if and only if $m=2^{\alpha}l~;~ \alpha \geq 2,$ and $l$ is odd, $ r\equiv \pm 1~(mod~2^{\alpha-1}).$
 \end{enumerate}
\end{thm}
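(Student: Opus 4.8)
The plan is to establish the three parts in sequence, reducing each to a computation of $2$-adic and odd-prime valuations. Part (1) is immediate: since $r\in R^2_m$ we have $r^2\equiv 1\ (\mbox{mod }n)$, so $(r-1)(r+1)=r^2-1\equiv 0\ (\mbox{mod }n)$, which places $r-1\in ZD\mbox{iv}(r+1)$ and hence $\langle r-1\rangle\leq ZD\mbox{iv}(r+1)$. This is precisely Proposition \ref{f15}(1) specialized to $v=2$, so no new work is needed.

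For part (2), the key first step is to rewrite $N_r$ as an explicit arithmetic quantity. Since $|ZD\mbox{iv}(r+1)|=\gcd(r+1,n)$ and the cyclic subgroup $\langle r-1\rangle\leq(\mathbb{Z}_n,+)$ has order $n/\gcd(r-1,n)$, the index is
\[
N_r=\left|\frac{ZD\mbox{iv}(r+1)}{\langle r-1\rangle}\right|=\frac{\gcd(r-1,n)\,\gcd(r+1,n)}{n}.
\]
This is exactly the integer whose divisor count appears in Theorem \ref{f16}. I would then bound it prime by prime. For an odd prime $p$ with $p^a\,\|\,n$: because $p$ does not divide $(r+1)-(r-1)=2$, it divides at most one of $r\pm1$, and combined with $p^a\mid(r-1)(r+1)$ coming from $r^2\equiv1$, the whole factor $p^a$ lands in a single term, so the $p$-contributions to the numerator and to $n$ cancel; thus the odd part of $N_r$ is $1$. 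For $p=2$, write $\beta=v_2(n)$; since $r$ is odd (as $\gcd(r,m)=1$ with $m$ even) both $r-1$ and $r+1$ are even and exactly one is $\equiv2\ (\mbox{mod }4)$, so $\min(v_2(r-1),v_2(r+1))=1$. Setting $c=\max(v_2(r-1),v_2(r+1))$ one finds $v_2(N_r)=1+\min(c,\beta)-\beta$, and the constraint $1+c\geq\beta$ forced by $r^2\equiv1\ (\mbox{mod }2^\beta)$ gives $c\geq\beta-1$, whence $v_2(N_r)\in\{0,1\}$. Therefore $N_r\in\{1,2\}$, proving $N_r\leq2$.

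For part (3), the same valuation bookkeeping pins down the equality case. From the computation above, $N_r=2$ exactly when $v_2(N_r)=1$, i.e. when $c\geq\beta$, i.e. when $\max(v_2(r-1),v_2(r+1))\geq\beta=v_2(n)$. This condition says $2^\beta\mid r-1$ or $2^\beta\mid r+1$, that is $r\equiv\pm1\ (\mbox{mod }2^\beta)$. Writing $m=2^\alpha l$ with $l$ odd gives $n=m/2=2^{\alpha-1}l$, so $\beta=\alpha-1$ and the condition becomes $r\equiv\pm1\ (\mbox{mod }2^{\alpha-1})$; moreover $4\mid m$ forces $\alpha\geq2$, so this hypothesis is automatic rather than restrictive. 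This matches the stated criterion exactly.

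I expect the only genuine obstacle to be the $2$-adic analysis underlying parts (2) and (3): one must track $\min$ and $\max$ of $v_2(r\pm1)$ carefully and use the divisibility $r^2\equiv1\ (\mbox{mod }2^\beta)$ to exclude $v_2(N_r)\geq2$. The odd-prime part and part (1) are routine, and the one auxiliary fact to record cleanly is that $\langle r-1\rangle$ has order $n/\gcd(r-1,n)$ in $\mathbb{Z}_n$, which is what yields the closed form for $N_r$.
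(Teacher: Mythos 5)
Your proposal is correct and takes essentially the same approach as the paper: both reduce to the identity $\left|\,ZD\mbox{iv}(r+1)/\langle r-1\rangle\,\right|=\gcd(r-1,n)\gcd(r+1,n)/n$ and then argue prime by prime that the odd-prime contributions cancel while the $2$-part is $1$ or $2$, your $2$-adic valuation bookkeeping being just a cleaner rewriting of the paper's explicit exponents $\alpha_{1},\alpha_{2}$ (and your derivation of the lower bound from $r^{2}\equiv1~(\mbox{mod}~2^{\beta})$ is in fact tidier than the paper's). The one step you leave implicit, which the paper does spell out, is the identification of the equivalence-class count $N_{r}$ with this index: that requires Theorem \ref{f16}, which gives $N_{r}$ as the number of divisors of the index, together with the observation that an integer in $\{1,2\}$ equals its own divisor count.
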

\begin{proof}
\begin{enumerate}
  \item Given that $r\in R^{2}_{m},$ then $(r-1).(r+1)=~r^{2}-1\equiv 0$ $\Rightarrow (r-1) \leq ZD\mbox{iv}(r+1)$ $~~~\Rightarrow \langle r-1 \rangle \leq ZD\mbox{iv}(r+1).$
  \item Suppose
  \begin{equation}\label{e4}
    \mid \frac{ZD\mbox{iv}(r+1)}{\langle r-1 \rangle} \mid = j
\end{equation}
 It is well known that $\mid \langle r-1 \rangle \mid =\frac{n}{\gcd(r-1,n)}$ and $\mid ZD\mbox{iv}(r+1) \mid =\gcd(r+1,n)$

put values in \ref{e4} $$\frac{\gcd(r+1,n)}{\frac{n}{\gcd(r-1,n)}}=j$$
\begin{equation}\label{e5}
    \Rightarrow~~~\gcd(r+1,n).\gcd(r-1,n)=n.j
\end{equation}
Now, suppose that $m=2^{\alpha}p^{t_{1}}_{1}p^{t_{2}}_{2} \ldots p^{t_{q}}_{q}$ where $\alpha \geq 2~\Rightarrow \alpha -1 \geq 1$

$~~~~~~~~~~~~~~~\Rightarrow n=2^{\alpha -1}p^{t_{1}}_{1}p^{t_{2}}_{2} \ldots p^{t_{q}}_{q}$ where $p_{i}$ are distinct odd primes and $t_{i}\geq 1.$ Then  $$~\gcd(r+1,n)=2^{\alpha_{1}}p^{a_{1}}_{1}p^{a_{2}}_{2} \ldots p^{a_{q}}_{q}$$ with $\alpha_{1}\leq (\alpha-1)~$ and $~0\leq a_{i}\leq t_{i}~ \forall~i$ and $$\gcd(r-1,n)=2^{\alpha_{2}}p^{b_{1}}_{1}p^{b_{2}}_{2} \ldots p^{b_{q}}_{q}$$ with $\alpha_{2}\leq (\alpha-1)~$ and $~0\leq b_{i}\leq t_{i}. $ Now, for all $i \in \{1,2,\ldots q \}$ either $a_{i}=0~$ or $~b_{i}=0.$ Indeed, if $a_{i}>0~$ and $~b_{i}>0$ then $p_{i}$ divides both $(r+1)~$ and $~(r-1)$ which is impossible. $~~~\because (p_{i}>2)$

Similarly, either $\alpha_{1}\leq1~~or~~\alpha_{2}\leq1.$ Otherwise $2^{min\{\alpha_{1},\alpha_{2}\}}$ divides $(r+1)~$ and $~(r-1)$ which is impossible.

Since, from \ref{e5} $$\gcd(r+1,n).\gcd(r-1,n)=n.j$$ $$(2^{\alpha_{1}}p^{a_{1}}_{1}p^{a_{2}}_{2} \ldots p^{a_{q}}_{q}).(2^{\alpha_{2}}p^{b_{1}}_{1}p^{b_{2}}_{2} \ldots p^{b_{q}}_{q})=j.2^{\alpha -1}p^{t_{1}}_{1}p^{t_{2}}_{2} \ldots p^{t_{q}}_{q}$$ $$(2^{\alpha_{1}+\alpha_{2}}p^{a_{1}+b_{1}}_{1}p^{a_{2}+b_{2}}_{2} \ldots p^{a_{q}+b_{q}}_{q})=j.2^{\alpha -1}p^{t_{1}}_{1}p^{t_{2}}_{2} \ldots p^{t_{q}}_{q}$$ Now, since $\forall~i$ either $a_{i}=0~$ or $~b_{i}=0$ $ \Rightarrow a_{i}+b_{i}=t_{i}$ $$2^{\alpha_{1}+\alpha_{2}}=j.2^{\alpha-1}\frac{p^{t_{1}}_{1}p^{t_{2}}_{2} \ldots p^{t_{q}}_{q}}{p^{a_{1}+b_{1}}_{1}p^{a_{2}+b_{2}}_{2} \ldots p^{a_{q}+b_{q}}_{q}}$$
\begin{equation}\label{e6}
    2^{\alpha_{1}+\alpha_{2}}=j.2^{\alpha-1}.
\end{equation}
 As $\alpha_{1}\leq (\alpha-1)$ and $\alpha_{2}\leq (\alpha-1)$ and since either $\alpha_{1}\leq 1$ or $\alpha_{2}\leq 1,$ $ \Rightarrow (\alpha-1)\leq (\alpha_{1}+\alpha_{2})\leq \alpha$

equation \ref{e6} $\Rightarrow ~~~j=2^{\alpha_{1}+\alpha_{2}-\alpha+1}$ $ ~~~\Rightarrow1\leq j \leq2.$
  \item By Theorem \ref{f16}, $N_{r}$ is the number of divisors of $$\frac{\gcd(r-1,n).\gcd(r+1,n)}{n}$$ and $N_{r}$ is the number of divisor of $j$ computed above. Therefore,

      $N_{r}=1~~$ if $~j=1~~$ and $~~N_{r}=2~~$ if $~~j=2.$

      In either case, $$\mid \frac{ZD\mbox{iv}(r+1)}{\langle r-1 \rangle} \mid \leq 2.$$ Finally according to the proof of $(2),$ $j=2$ if and only if

      $~\alpha_{1}=\alpha-1,~\alpha_{2}=1~~~$ or $~~\alpha_{1}=1,~\alpha_{2}=\alpha-1.$

      If $~\alpha_{1}=\alpha-1,~\alpha_{2}=1$ then $2^{\alpha-1}$ divides $(r+1).$

      If $~\alpha_{1}=1,~\alpha_{2}=\alpha-1$ then $2^{\alpha-1}$ divides $(r-1).$

      $~\Rightarrow~~ r\equiv \pm 1~(\mbox{mod}~2^{\alpha-1}).$
\end{enumerate}
\end{proof}
\begin{expl}
Let $G=QD_{48}$ where $m=2^{3}.3$, $\alpha=3$, then \\ $U_{24}=\{ 1,5,7,11,13,17,19,23\}$ and $ R_{24}^{2}=\{ 1,5,7,11,13,17,19,23\}.$
\begin{enumerate}
\item Let $r=5 \in R_{24}^{2}$ then $\langle r-1\rangle=\langle 4\rangle= \{0,4,8\}=B_{1}$ and

$ZDiv(r+1)=ZDiv6=\{0,2,4,6,8,10\}=B_{2}$ clearly, $\langle 4\rangle\leq ZDiv6$.
\item For $r=5,$ $ \frac{ZD\mbox{iv}(6)}{\langle 4\rangle}=\frac{\{0,2,4,6,8,10\}}{\{0,4,8\}}=\frac{B_{2}}{B_{1}}=x+B_{1}~~~\forall~~ x \in B_{2}$

$0+B_{1}=\{0,4,8\}=B_{1}$, $2+B_{1}=\{2,6,10\}=B_{3}$

$4+B_{1}=\{4,8,0\}=B_{1},$ $6+B_{1}=\{6,10,2\}=B_{3}$

$8+B_{1}=\{8,0,4\}=B_{1},$ $10+B_{1}=\{10,2,6\}=B_{3}$

$U_{24}$ action on $\frac{ZDiv6}{\langle4 \rangle}=\{B_{1},B_{2}\}$
$\Rightarrow N_{r}=N_{5}=2$.
\item Since, $m=2^{3}.3$ $\Rightarrow l=3>1 $ is odd and $r=5\equiv1~~(mod~2^{2})$. \\ Thus $N_{5}=2$ as calculated in (2).
\end{enumerate}
\end{expl}
\begin{cor}
Suppose that $m=2^{\alpha}p^{t_{1}}_{1}p^{t_{2}}_{2} \ldots p^{t_{q}}_{q},$ where the $p_{i}$ are distinct odd primes and $\alpha\geq 2$. Then the number of equivalence classes $C_{4k}$ of $Aut_{2}(QD_{8k})$ is given by
$$C_{4k} = \left\{
           \begin{array}{ll}
             2^{q+1} & \hbox{$\alpha=2$} \\
             2^{q+2} & \hbox{$\alpha\geq 3$.}
           \end{array}
         \right.$$

\end{cor}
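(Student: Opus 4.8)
The plan is to count equivalence classes of $Aut_2(QD_{2m})$ by organizing the sum over leading coefficients $r\in R^2_m$, using the preceding theorem which tells us that each $r$ contributes exactly $N_r\in\{1,2\}$ classes. Thus the total number of classes is $C_{4k}=\sum_{r\in R^2_m}N_r$, and the entire problem reduces to counting how many $r$ have $N_r=1$ versus $N_r=2$. From part (3) of the preceding theorem, $N_r=2$ precisely when $r\equiv\pm1\pmod{2^{\alpha-1}}$ (with $m=2^\alpha l$, $l$ odd), and $N_r=1$ otherwise.

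First I would split into the two cases $\alpha=2$ and $\alpha\geq 3$ dictated by the statement. For $\alpha=2$ we have $n=2l$ with $l=p_1^{t_1}\cdots p_q^{t_q}$ odd, and $2^{\alpha-1}=2$; since every $r\in U_m$ is automatically odd, the congruence $r\equiv\pm1\pmod 2$ holds for all $r$, so $N_r=2$ for every $r\in R^2_m$. Therefore $C_{4k}=2\,|R^2_m|$. Invoking the earlier proposition computing $|R^2_m|$, in the case $\alpha=2$ we have $|R^2_m|=2^{q+1}$... but wait, here I would need to be careful: that proposition's value $2^{q+1}$ is stated for general $\alpha=2$, so $C_{4k}=2\cdot 2^{q}=2^{q+1}$. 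The bookkeeping must be checked against the precise normalization of $|R^2_m|$ used in that proposition, since an off-by-a-factor-of-two there propagates directly into the final count.

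For $\alpha\geq 3$, the condition $r\equiv\pm1\pmod{2^{\alpha-1}}$ is now a genuine restriction, and the key step is to count the elements of $R^2_m$ satisfying it. I would work via the Chinese Remainder decomposition $U_m\cong U_{2^\alpha}\times\prod_i U_{p_i^{t_i}}$, under which the square roots of unity factor componentwise: $|R^2_m|=|R^2_{2^\alpha}|\cdot\prod_i|R^2_{p_i^{t_i}}|=2^{\alpha-?}\cdot 2^q$ — for $\alpha\geq 3$ the $2$-adic factor contributes the four classes $\pm1,\pm(2^{\alpha-1}\!-\!1)$ type square roots, giving $|R^2_m|=2^{q+2}$ or $2^{q+3}$ as in the earlier proposition. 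The subset with $r\equiv\pm1\pmod{2^{\alpha-1}}$ corresponds to restricting the $2^\alpha$-component to the two residues $\pm1\bmod 2^{\alpha-1}$, so I would count those that are $N_r=2$ and those that are $N_r=1$ and combine via $C_{4k}=1\cdot\#\{N_r=1\}+2\cdot\#\{N_r=2\}$.

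The main obstacle will be the precise arithmetic of the $2$-adic component of $R^2_m$ and matching it against the stated value of $|R^2_m|$ from the earlier proposition (whose proof is only sketched as ``similar to'' the prime case), since the final constants $2^{q+1}$ and $2^{q+2}$ depend delicately on how many square roots of unity modulo $2^\alpha$ lie in each congruence class mod $2^{\alpha-1}$. I expect the clean outcome is that exactly half the elements of $R^2_m$ have $N_r=2$ and half have $N_r=1$ once $\alpha\geq 3$, which would give $C_{4k}=\tfrac12|R^2_m|\cdot 1+\tfrac12|R^2_m|\cdot 2=\tfrac32|R^2_m|$ — but this does not obviously produce a power of two, so the honest resolution is that for $\alpha\geq 3$ the congruence $r\equiv\pm1\pmod{2^{\alpha-1}}$ in fact holds for \emph{all} $r\in R^2_{2^\alpha}$ (every square root of unity mod $2^\alpha$ is $\equiv\pm1\bmod 2^{\alpha-1}$), forcing $N_r=2$ uniformly and hence $C_{4k}=2|R^2_m|$. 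Reconciling this uniform doubling with the stated exponents $2^{q+2}$ requires that $|R^2_m|=2^{q+1}$ in the relevant $\alpha\geq 3$ regime, so the crux is pinning down exactly which normalization of $|R^2_m|$ feeds into the formula; that alignment, rather than any deep structural fact, is where the care is needed.
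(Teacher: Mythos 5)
Your attempt does not close, and the step you lean on for $\alpha\geq 3$ rests on a wrong modulus. By the paper's definition, $R^{2}_{m}=\{r\in U_{m}\mid r^{2}\equiv 1 \pmod{n}\}$ with $n=m/2=2^{\alpha-1}p_{1}^{t_{1}}\cdots p_{q}^{t_{q}}$: the squaring condition is modulo $n$, not modulo $m$. So in your CRT decomposition the $2$-adic factor consists of residues mod $2^{\alpha}$ whose square is $1$ modulo $2^{\alpha-1}$, i.e.\ lifts of square roots of unity mod $2^{\alpha-1}$ --- not the set $R^{2}_{2^{\alpha}}$ you used. For $\alpha\geq 4$ this includes lifts of $2^{\alpha-2}\pm 1$, which are \emph{not} $\equiv\pm 1\pmod{2^{\alpha-1}}$; by the preceding theorem those $r$ have $N_{r}=1$, so your proposed ``uniform doubling'' fails exactly where you invoke it, and your own identity $C=\#\{r:N_{r}=1\}+2\,\#\{r:N_{r}=2\}$ yields $\tfrac{3}{2}|R^{2}_{m}|$ there, not a power of two. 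Your fallback, demanding $|R^{2}_{m}|=2^{q+1}$ for $\alpha\geq 3$, flatly contradicts the paper's earlier proposition ($|R^{2}_{m}|=2^{q+2}$ for $\alpha=3$ and $2^{q+3}$ for $\alpha\geq 4$); and already at $\alpha=2$ you silently replace $|R^{2}_{m}|=2^{q+1}$ by $2^{q}$ in order to force $2|R^{2}_{m}|$ to equal $2^{q+1}$. So no amount of care about ``normalization'' rescues the computation: under your reading, summing $N_{r}$ over $R^{2}_{m}$ gives $2^{q+2}$, $2^{q+3}$, and $3\cdot 2^{q+2}$ for $\alpha=2$, $\alpha=3$, $\alpha\geq 4$ respectively, none of which is the stated formula.

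The difficulty you ran into is real, because the quantity the corollary's formula actually computes is not $\sum_{r}N_{r}$. The paper supplies no proof of this corollary; its accompanying example ($QD_{384}$, $m=192=2^{6}\cdot 3$) computes the set $\{r\in R^{2}_{m}\mid r\equiv\pm 1\pmod{2^{\alpha-1}}\}$, i.e.\ the number of leading coefficients $r$ with $N_{r}=2$, and calls that number $C_{m}=2^{3}$. Counting that set by CRT is immediate: for $\alpha=2$ every odd $r$ satisfies $r\equiv 1\pmod 2$, and for $\alpha=3$ every odd $r$ satisfies $r\equiv\pm1\pmod 4$, so the count is all of $R^{2}_{m}$, namely $2^{q+1}$ and $2^{q+2}$ respectively; for $\alpha\geq 4$ there are $4$ admissible residues mod $2^{\alpha}$ (namely $\pm 1$ and $\pm1+2^{\alpha-1}$) and $2$ choices mod each $p_{i}^{t_{i}}$, giving $4\cdot 2^{q}=2^{q+2}$. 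That reproduces the displayed formula in all cases. A proof consistent with the paper must therefore count $\#\{r\in R^{2}_{m}:r\equiv\pm1 \pmod{2^{\alpha-1}}\}$ (equivalently, the $r$ admitting two classes), not the total number of pairs $(r,A)$; your attempt, which takes the latter (and, to be fair, more natural) reading of $C_{4k}$, cannot be repaired into a proof of the statement as written.
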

\begin{expl}
Let $G=QD_{384}$. 
Thus $ R_{192}^{2}=\{ 1,17,31,47,49,65,79,95,97,113,127,$ $143,145,161,175,191\} \Rightarrow \mid R_{192}^{2}\mid=2^{4}$.
It is easy to check that the values $r=1,65,97,161$ satisfy $r\equiv1~(mod~2^{5})$ and the values $r=31,95,127,191$ satisfy $r\equiv-1~(mod~2^{5})$.
So, $C_{m}=\mid\{1,31,65,95,97,127,161,191\}\mid=2^{3}$.
\end{expl}
Now for $\theta$ an involution, the set of twisted involutions is given in the followings porosities:
\begin{prop}
If $\theta^{2}=$id, then the set of twisted involutions is given as\\
$R=\{a^{i}\mid (r+1)i\equiv -2s~(mod~n)\} \cup \{a^{i}b \mid (r-1)i\equiv -s~(mod~n)\}$

\end{prop}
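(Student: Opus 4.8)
The plan is to test the defining condition $\theta(x)=x^{-1}$ on the two normal forms $x=a^{i}$ and $x=a^{i}b$ that together exhaust $QD_{2m}$, and in each case to convert the resulting equality of $a$-exponents into a congruence modulo $n$. Since $\theta^{2}=\mathrm{id}$, Proposition \ref{f2} guarantees $r\in R^{2}_{m}$ and $s\in ZD\mathrm{iv}(r+1)$, so $\theta$ is a genuine involutory automorphism and the computations below are meaningful; beyond this I will only need the explicit action formula \eqref{e1} and the commutation rule of Lemma \ref{f1}.

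For the first case I take $x=a^{i}$, so that $x^{-1}=a^{-i}$. Evaluating \eqref{e1} at $t=0$ gives $\theta(a^{i})=a^{ri+(2-n)s}$, and the twisted-involution condition $\theta(a^{i})=a^{-i}$ becomes the exponent equality $ri+(2-n)s\equiv -i$. Rearranging yields $i(r+1)+(2-n)s\equiv 0$, and reducing modulo $n$ (so that the term $ns$ drops out) leaves $(r+1)i\equiv -2s\pmod{n}$, which is exactly the first family in the statement.

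For the second case I take $x=a^{i}b$. The key preliminary step is to put $x^{-1}$ into normal form: since $b^{2}=1$ we have $x^{-1}=ba^{-i}$, and Lemma \ref{f1}(1) rewrites this as $ba^{-i}=a^{-i(n-1)}b$. On the other hand \eqref{e1} at $t=1$ gives $\theta(a^{i}b)=a^{ri+s}b$. Both sides carry the same power of $b$, so equating them forces equality of the $a$-exponents, $ri+s\equiv -i(n-1)=-in+i$, which rearranges to $i(r-1)+in+s\equiv 0$; reducing modulo $n$ kills the $in$ term and produces $(r-1)i\equiv -s\pmod{n}$, the second family. Taking the union of the two solution sets then gives the asserted description of $R$.

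The exponent arithmetic is routine; the one point requiring care is the passage from equalities of $a$-exponents, which a priori live modulo $m=2n$, to the congruences modulo $n$ in which the statement is phrased. The main (minor) obstacle is therefore to see that the only places where the distinction between $m$ and $n$ enters are the terms $ns$ (first case) and $in$ (second case), each a multiple of $n$, so that reducing modulo $n$ is exactly the normalization already used throughout, e.g. in Theorem \ref{f4}; the commutation relation $ba^{i}=a^{i(n-1)}b$ is precisely what absorbs the order-$m$ behaviour of $a$ into the quasi-dihedral rule and makes this reduction legitimate.
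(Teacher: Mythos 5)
Your proof is correct, and its overall skeleton --- test $\theta(x)=x^{-1}$ on the two normal forms $a^{i}$ and $a^{i}b$, equate $a$-exponents via the action formula \eqref{e1}, and reduce modulo $n$ --- is the same as the paper's; your first case is essentially word-for-word the paper's argument. The second case is where you genuinely differ, and your version is the better one. The paper passes from $\theta(a^{i}b)=(a^{i}b)^{-1}$ directly to $a^{ri+s}b=a^{i}b$, i.e.\ it silently uses $(a^{i}b)^{-1}=a^{i}b$; by Lemma \ref{f1}(2) this holds only when $i$ is even, since for odd $i$ the element $a^{i}b$ has order $4$ and in fact $(a^{i}b)^{-1}=a^{i+n}b$. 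You instead compute the inverse in normal form, $(a^{i}b)^{-1}=ba^{-i}=a^{-i(n-1)}b$, which introduces the extra exponent term $in$; since that term is a multiple of $n$, it dies in the mod-$n$ congruence, and you land on the same condition $(r-1)i\equiv -s \pmod{n}$ --- but now valid for all $i$, not just even $i$. So your route closes a small gap in the published proof at the cost of one application of the commutation rule. One caveat applies to both arguments equally: the exponent identities hold modulo $m=2n$, and reduction to congruences modulo $n$ is a one-way implication, so strictly speaking both proofs establish that every twisted involution satisfies the stated congruences rather than the full set equality; you at least flag this issue explicitly, whereas the paper (here, and already in Theorem \ref{f4}) treats the mod-$n$ description as the intended normalization without comment.
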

 \begin{proof}

Since, $\theta$ is an involution therefore, $r\in R^{2}_{m}~\mbox{and}~~s\in ZD\mbox{iv}(r+1).$
To find the set $R,~~~$, let $~a^{i}\in G,$ then $~~\theta(a^{i})=(a^{i})^{-1}$ $\Rightarrow a^{ri+(2-n)s}=a^{-i}$
$\Rightarrow ri+(2-n)s=-i$ $\Rightarrow(r+1)i+(2-n)s=0$ $\Rightarrow (r+1)i\equiv -2s~(\mbox{mod}~n).$

Now for $a^{i}b\in G,$ we have,
$\theta(a^{i}b)=(a^{i}b)^{-1}$ $~~~~\Rightarrow a^{ri+s}b=a^{i}b$ $~~~~\Rightarrow ri+s=i$
$\Rightarrow(r-1)i\equiv -s~~~(\mbox{mod}~n)$, hence
$R=\{a^{i}\mid (r+1)i\equiv -2s~(\mbox{mod}~n)\} \cup \{a^{i}b \mid (r-1)i\equiv -s~(\mbox{mod}~n)\}.$
\end{proof}
\begin{cor}
If $\theta^{2}=$id, $m=4k$, $n=2k$ and $k$ is even number, then $Q$ is a subgroup of $\langle a\rangle$ and the natural isomorphism $\psi : \langle a\rangle\longrightarrow \mathbb{Z}_{n}$ gives using Theorem \ref{f4} and Corollary \ref{f7} and \ref{f9}
\begin{enumerate}
  \item If $\mid Q\mid=2$, then
$\psi(Q)=\langle k\rangle.$

\item If $\mid Q \mid=\frac{k}{2}$  then
$$\psi(Q)=\left\{
\begin{array}{ll}
\langle d_{1}\rangle & \hbox{If \ $n\nmid r+2s-1$ and $d_{1}=\gcd(r+2s-1,n)$ } \\
\langle 4\rangle & \hbox{If \ $n\mid r+2s-1$ and $n=\gcd(r+2s-1,n)$.}
\end{array}
\right.
$$
\item If $\mid Q\mid=k$ and $\gcd(r+2s-1,n)=d_{1}$ then

$$\psi(Q)=\left\{
\begin{array}{ll}
\langle \frac{d_{1}}{2}\rangle & \hbox{If \{$(r+2s-1)\cap [l(1-r)-s]\}=\emptyset$} \\
\langle d_{1}\rangle & \hbox{If \{$(r+2s-1)\cap [l(1-r)-s]\}\not=\emptyset$,}
\end{array}
\right.
$$
where $l$ is an even number.
\end{enumerate}
\end{cor}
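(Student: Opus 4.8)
The plan is to realize $Q$ as a cyclic subgroup of $\langle a\rangle$ and then read off its image under the coordinate isomorphism $\psi$, so that the statement becomes a direct translation of Corollaries \ref{f7} and \ref{f9} into additive notation on $\mathbb{Z}_n$.

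First I would quote Theorem \ref{f4}, which presents $Q=\{a^j \mid j\in\langle r+2s-1\rangle \cup [l(1-r)-s]\ (\bmod\ n)\}$ with $l$ even. Every element displayed here is a power of $a$, so at once $Q\subseteq\langle a\rangle$; thus $Q$ sits inside a cyclic group of order $n=2k$, and the map $\psi\colon\langle a\rangle\to\mathbb{Z}_n$, $a^j\mapsto j$, is a group isomorphism under which any subgroup $\langle a^d\rangle$ goes to the cyclic subgroup $\langle d\rangle\leq\mathbb{Z}_n$. Next I would use the hypothesis that $\theta$ is an involution with $k$ even to invoke Corollary \ref{f7}: in each of the size regimes $|Q|\in\{2,\tfrac{k}{2},k\}$ the relevant conditions land us in a cyclic case, so $Q$ is closed under the group operation and is therefore a genuine subgroup of $\langle a\rangle$. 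Crucially, Corollary \ref{f9} goes further and names a generator in every such case, exhibiting $Q=\langle a^{d}\rangle$ for an explicit exponent $d$ built from $d_1=\gcd(r+2s-1,n)$.

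The remaining step is purely mechanical: apply $\psi$ to each line of Corollary \ref{f9} and use $\psi(\langle a^d\rangle)=\langle d\rangle$. This gives $\psi(Q)=\langle k\rangle$ when $|Q|=2$; $\langle d_1\rangle$ or $\langle 4\rangle$ when $|Q|=\tfrac{k}{2}$, according to whether $n\nmid r+2s-1$ or $n\mid r+2s-1$ with $\gcd(r+2s-1,n)=n$; and $\langle\tfrac{d_1}{2}\rangle$ or $\langle d_1\rangle$ when $|Q|=k$, according to whether the two pieces $\langle r+2s-1\rangle$ and $\{l(1-r)-s\}$ are disjoint or meet. I expect the only real obstacle to be bookkeeping: verifying that each hypothesis in the present statement matches exactly the hypothesis in Corollary \ref{f9} that produces the stated generator, and checking that the claimed order $|Q|$ is consistent with the order $n/\gcd(d,n)$ of the resulting subgroup $\langle d\rangle$. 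The one conceptually substantive point --- that the union of the subgroup $\langle r+2s-1\rangle$ with the coset-type set $\{l(1-r)-s : l \text{ even}\}$ is actually closed --- is already established in Corollaries \ref{f7} and \ref{f9}, so in this proof it may be cited rather than redone.
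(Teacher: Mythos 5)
Your proposal is correct and matches the paper's intent exactly: the paper offers no separate proof for this corollary, treating it as an immediate restatement of Corollary \ref{f9} (whose cases already exhibit $Q=\langle a^{k}\rangle$, $\langle a^{d_{1}}\rangle$, $\langle a^{4}\rangle$, or $\langle a^{d_{1}/2}\rangle$) transported through $\psi$, with containment in $\langle a\rangle$ coming from Theorem \ref{f4} and cyclicity from Corollary \ref{f7}. Your bookkeeping route via $\psi(\langle a^{d}\rangle)=\langle d\rangle$ is precisely that derivation.
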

\begin{cor}
If $\theta^{2}=$id, $m=4k$, $n=2k$ and $k$ is even number, then $Q$ is a subgroup of $\langle a\rangle$ and the natural isomorphism $\psi : \langle a\rangle\longrightarrow \mathbb{Z}_{n}$ gives using Theorem \ref{f4} and Corollary \ref{f8} and \ref{f10}
\begin{enumerate}
  \item If $s$ is even, then $$\psi(Q)=\left\{
   \begin{array}{ll}
   \langle 0\rangle & \hbox{If $r=1,~s=0,~\mid Q\mid=1$} \\
   \langle 2\rangle & \hbox{If $\mid Q\mid=k$.}
   \end{array}
   \right.
$$
  \item If $s$ is odd, then $$\psi(Q)=\left\{
  \begin{array}{ll}
  \langle k\rangle & \hbox{$\mid Q\mid=2~ \mbox{If}~ n\mid 1-r ~\mbox{and}~ n\mid s-k $} \\
  \langle 1\rangle & \hbox{$\mid Q\mid=n~\mbox{If} \ n\nmid r+2s-1 \ \mbox{and} \ n\nmid 1-r$.}
  \end{array}
  \right.
$$
\end{enumerate}
\end{cor}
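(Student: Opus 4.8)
The plan is to deduce this statement almost entirely from the description of $Q$ in Theorem \ref{f4} together with the classification already carried out in Corollaries \ref{f8} and \ref{f10}; the only genuinely new ingredient is the passage from $\langle a\rangle$ to $\mathbb{Z}_n$ through $\psi$. First I would record that, by Theorem \ref{f4}, every element of $Q$ is a power of $a$, so $Q\subseteq\langle a\rangle$. To upgrade this containment to the assertion that $Q$ is a \emph{subgroup}, I would exploit that $\theta$ is an involution: for any $q=g\theta(g)^{-1}$ one has $\theta(q)=\theta(g)\theta^{2}(g)^{-1}=\theta(g)g^{-1}=q^{-1}$, so $Q\subseteq R$, and, restricting the map $x\mapsto x\theta(x)^{-1}$ to the abelian $\theta$-stable subgroup $\langle a\rangle$, this map is a homomorphism there (all relevant factors commute), so its image is a subgroup. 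The point that requires care is that $Q$ also receives the elements $(a^{l}b)\theta(a^{l}b)^{-1}$; using the computation in the proof of Theorem \ref{f4} these contribute the coset $-s+\langle 2(1-r)\rangle$, and one must check that adjoining this coset to $\langle r+2s-1\rangle$ still yields a subgroup of $\mathbb{Z}_n$.

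Once $Q$ is known to be a subgroup of the cyclic group $\langle a\rangle$, it is itself cyclic, say $Q=\langle a^{d}\rangle$. The natural map $\psi:\langle a\rangle\to\mathbb{Z}_n$, $a^{j}\mapsto j\pmod n$, carries $\langle a^{d}\rangle$ onto $\langle d\rangle\le\mathbb{Z}_n$, so the whole task reduces to identifying the generator $d$ (equivalently $|Q|$) in each case. This is exactly the data already produced in Corollary \ref{f10}: in the even-$s$ branch the two possibilities $r=1,\,s=0$ and $|Q|=k$ give $Q=\{1\}$ and $Q=\langle a^{2}\rangle$, hence $\psi(Q)=\langle 0\rangle$ and $\psi(Q)=\langle 2\rangle$; in the odd-$s$ branch the hypotheses $n\mid 1-r$ and $n\mid s-k$ force $|Q|=2$ with $Q=\langle a^{k}\rangle$, whence $\psi(Q)=\langle k\rangle$, while $n\nmid r+2s-1$ and $n\nmid 1-r$ force $|Q|=n$ with $Q=\langle a\rangle$, whence $\psi(Q)=\langle 1\rangle$. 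Each line of the corollary is then obtained by simply applying $\psi$ to the generator supplied by Corollary \ref{f10}.

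The main obstacle is the subgroup claim itself, i.e. showing that the union appearing in Theorem \ref{f4},
$$\psi(Q)=\langle r+2s-1\rangle\cup\bigl(-s+\langle 2(1-r)\rangle\bigr),$$
closes up into a single cyclic subgroup of $\mathbb{Z}_n$. Here I would feed in the involution relations $r^{2}\equiv 1\pmod n$ and $(r+1)s\equiv 0\pmod n$ (the latter being $s\in ZD\mbox{iv}(r+1)$), together with the parity conditions on $k$ and on $s$, to show that the coset $-s+\langle 2(1-r)\rangle$ is either absorbed into $\langle r+2s-1\rangle$ or extends it by an element of order two, so that in every case the image is the cyclic subgroup $\langle d\rangle$ for the stated $d$. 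Verifying the stated orders $|Q|\in\{1,2,k,n\}$ along the way is then a direct $\gcd$ computation using Theorem \ref{f4} and Corollary \ref{f8}, and the stated images of $\psi$ follow at once.
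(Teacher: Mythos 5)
Your overall route is the same as the paper's: the paper supplies no argument for this corollary beyond the citations embedded in its statement (Theorem \ref{f4} for $Q\subseteq\langle a\rangle$, Corollaries \ref{f8} and \ref{f10} for the case data), and your reduction --- read off $|Q|$ and a generator from Corollary \ref{f10}, then push forward along $\psi$ --- is exactly that. Your added observation that $x\mapsto x\theta(x)^{-1}$, restricted to the abelian $\theta$-stable subgroup $\langle a\rangle$, is a homomorphism (so that part of $Q$ is automatically a subgroup) is correct and already goes beyond anything the paper records. Note also that you silently inherit the paper's own mismatch: the corollary says ``$k$ is even,'' but the data you invoke from Corollaries \ref{f8} and \ref{f10} is stated under the hypothesis that $k$ is odd.

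The genuine gap is the step you defer: showing that adjoining the coset $-s+\langle 2(1-r)\rangle$ (from the elements $(a^{l}b)\theta(a^{l}b)^{-1}$) to $\langle r+2s-1\rangle$ again gives a subgroup. This is the entire content of the assertion ``$Q$ is a subgroup of $\langle a\rangle$,'' it is only announced in your write-up, and the version you announce --- that closure holds ``in every case'' --- is false inside the paper's framework. Concretely, take $G=QD_{40}$ ($n=10$, $k=5$) and $\theta=9x+1$: this is an involution in the sense of Proposition \ref{f2}, since $9^{2}\equiv 1 \pmod{10}$ and $ZD\mbox{iv}(9+1)=ZD\mbox{iv}(0)=\mathbb{Z}_{10}$, so $s=1$ is admissible. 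Theorem \ref{f4} then gives the exponent set $\langle r+2s-1\rangle\cup\{l(1-r)-s \pmod{10}\ :\ l \mbox{ even}\}=\{0\}\cup\{1,3,5,7,9\}$, which is not closed under addition (its size $6$ does not even divide $10$); this is consistent with Corollary \ref{f8}(2)(a), which declares $Q$ non-cyclic precisely when $s$ is odd and $r+2s-1\equiv 0\pmod n$. So no amount of ``feeding in the involution relations'' can prove closure for all involutions: the argument must be restricted from the outset to the cases actually enumerated in the corollary ($s$ even; or $n\mid 1-r$ and $n\mid s-k$; or $n\nmid r+2s-1$ and $n\nmid 1-r$), where the coset analysis does go through. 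As written, the subgroup claim --- the one thing distinguishing this corollary from a restatement of Corollary \ref{f10} --- remains unproven, and is overstated.
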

\begin{cor} In any Case $R\neq Q.$
\end{cor}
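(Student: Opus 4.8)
The claim is that $R \neq Q$ in every case (for $\theta$ an involution). My strategy is to show that $R$ and $Q$ cannot coincide by exhibiting a structural invariant that distinguishes them, and the cleanest such invariant is the presence or absence of reflection-type elements $a^i b$. The plan is to argue that $Q$ always lies inside the rotation subgroup $\langle a\rangle$, whereas $R$ always contains at least one element of the form $a^i b$; since $R$ meets the coset $\langle a\rangle b$ but $Q$ does not, the two sets cannot be equal.

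First I would invoke Theorem \ref{f4}, which describes $Q$ entirely as a set of powers of $a$, i.e. $Q \subseteq \langle a\rangle$; this is already recorded in the proof of Proposition \ref{f20} as the inclusion $Q \subseteq \langle a\rangle \leq QD_{2m}$. So no element of $Q$ is of the form $a^i b$. Next I would turn to the description of $R$ from the immediately preceding Proposition, namely
\[
R = \{a^{i}\mid (r+1)i\equiv -2s~(\mathrm{mod}~n)\}\cup\{a^{i}b\mid (r-1)i\equiv -s~(\mathrm{mod}~n)\},
\]
and show that the second component is nonempty. Since $\theta$ is an involution we have $r\in R^{2}_{m}$ and $s\in ZD\mathrm{iv}(r+1)$, and the congruence $(r-1)i\equiv -s~(\mathrm{mod}~n)$ is solvable in $i$ precisely when $\gcd(r-1,n)$ divides $s$. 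The main work is therefore to verify that $\gcd(r-1,n)\mid s$ holds whenever $s\in ZD\mathrm{iv}(r+1)$: from Theorem 5.9(1) we have $\langle r-1\rangle \leq ZD\mathrm{iv}(r+1)$, which translates (under the isomorphism $\langle a\rangle \cong \mathbb{Z}_n$) into the divisibility needed to guarantee a solution. Equivalently, one checks directly that $i=0$ with $b$, i.e. the element $b$ itself, lies in $R$ exactly when $\theta(b)=b^{-1}=b$; and since for involutions one can always locate a fixed or inverted reflection, the twisted-involution component $\{a^i b\}$ is never empty.

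The key obstacle is confirming that the reflection part of $R$ is genuinely nonempty for every admissible pair $(r,s)$, rather than just generically. I expect to handle this by the divisibility argument above: the congruence $(r-1)i\equiv -s~(\mathrm{mod}~n)$ has a solution iff $\gcd(r-1,n)\mid s$, and the containment $\langle r-1\rangle \leq ZD\mathrm{iv}(r+1)$ combined with $s\in ZD\mathrm{iv}(r+1)$ forces this divisibility. Once the second component of $R$ is shown nonempty, the proof concludes immediately: pick any $a^{i}b\in R$; then $a^{i}b\notin \langle a\rangle \supseteq Q$, so $a^{i}b\in R\setminus Q$, whence $R\neq Q$ as claimed.
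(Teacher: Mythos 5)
Your first step is sound: Theorem \ref{f4} exhibits $Q$ as a set of powers of $a$, so $Q\subseteq\langle a\rangle$ and no element $a^{i}b$ can lie in $Q$. The genuine gap is the claim that the component $\{a^{i}b\mid (r-1)i\equiv -s~(\mathrm{mod}~n)\}$ of $R$ is always nonempty. That congruence is solvable in $i$ exactly when $s\in\langle r-1\rangle=\langle \gcd(r-1,n)\rangle$, i.e.\ when $\gcd(r-1,n)\mid s$; but Proposition \ref{f2} only gives $s\in ZD\mbox{iv}(r+1)$, and the containment $\langle r-1\rangle\leq ZD\mbox{iv}(r+1)$ points in the \emph{wrong direction} --- your inference would need $ZD\mbox{iv}(r+1)\leq\langle r-1\rangle$. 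By the theorem on $N_{r}$ in Section 5, the index of $\langle r-1\rangle$ in $ZD\mbox{iv}(r+1)$ can equal $2$, and any $s$ in the nontrivial coset defeats the argument. Concretely, using the paper's own example of that theorem: in $QD_{48}$ ($m=24$, $n=12$) take $r=5$, $s=2$. Then $5\in R^{2}_{24}$ and $2\in ZD\mbox{iv}(6)=\{0,2,4,6,8,10\}$, so $\theta=5x+2$ is an involution by Proposition \ref{f2}; yet $\gcd(4,12)=4\nmid 2$, so $4i\equiv -2~(\mathrm{mod}~12)$ has no solution and $R$ contains no element of the form $a^{i}b$ at all. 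Your fallback sentence (``for involutions one can always locate a fixed or inverted reflection'') is not an argument; it is exactly the unproved claim, restated, and it is false.

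Moreover, the gap cannot be repaired, because the corollary itself fails once $R$ and $Q$ are computed from their definitions rather than from the paper's displayed formulas (which are internally inconsistent; note that the displayed $R$ does not even contain the identity when $2s\not\equiv 0~(\mathrm{mod}~n)$, although $\theta(1)=1=1^{-1}$ always). Consider the involution $\theta$ of $QD_{48}$ with $\theta(a)=a^{5}$, $\theta(b)=a^{4}b$. Since $(a^{i}b)^{-1}=a^{i}b$ for $i$ even and $(a^{i}b)^{-1}=a^{i+12}b$ for $i$ odd, the condition $\theta(a^{i}b)=(a^{i}b)^{-1}$ becomes $4i\equiv-4$, resp.\ $4i\equiv 8~(\mathrm{mod}~24)$, and in each case every solution has the wrong parity, so no $a^{i}b$ is a twisted involution; meanwhile $\theta(a^{i})=a^{-i}$ forces $4\mid i$, giving $R=\langle a^{4}\rangle$. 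On the other hand $a^{i}\theta(a^{i})^{-1}=a^{-4i}$ and $(a^{i}b)\theta(a^{i}b)^{-1}$ also lands in $\langle a^{4}\rangle$, so $Q=\langle a^{4}\rangle=R$. Thus the statement, read as a claim about the sets $R$ and $Q$ as defined, is false, and no completion of your argument can prove it in full generality; what your method does prove is the weaker, correct statement that $R\neq Q$ whenever $\gcd(r-1,n)\mid s$ (in particular whenever $N_{r}=1$). For what it is worth, the paper states this corollary with no proof whatsoever, so there is nothing on its side to compare against.
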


\end{document}